\newcommand{\Q}{\mathbb{Q}}
\newcommand{\Z}{\mathbb{Z}}
\newcommand{\MM}{\mathbb{M}}
\newcommand{\C}{\mathbb{C}}
\newcommand{\N}{\mathbb{N}}
\newcommand{\F}{\mathbb{F}}
\newcommand\GLtwoZ{\operatorname{GL}_2(\Z)}
\newcommand\GLtwon{\operatorname{GL}_{2n}}
\newcommand\SLtwoZ{\operatorname{SL}_2(\Z)}
\newcommand\SLtwoF{\operatorname{SL}_2(\F_2)}
\newcommand\Spn{\operatorname{Sp}_n}
\newcommand\SptwoZ{\operatorname{Sp}_2(\Z)}
\newcommand\SptwoQ{\operatorname{Sp}_2(\Q)}
\newcommand\SptwoF{\operatorname{Sp}_2(\F_2)}
\newcommand\Half{{\mathcal H}}
\newcommand\inv{^{-1}}
\newcommand\ep{\epsilon}
\newcommand\s{\sigma}
\newcommand\Jac{\Gamma_{\infty}(\Z)}
\newcommand\SSS{\mathcal S}
\newcommand\ldL{\llcorner}
\newcommand\rdL{\lrcorner}
\newcommand\luL{\ulcorner}
\newcommand\ruL{\urcorner}
\newcommand{\diag}{\text{diag}}
\newcommand{\tr}{\text{tr}}
\newcommand\FJ{\operatorname{FJ}}
\newcommand\ord{\operatorname{ord}}
\newcommand\Grit{\operatorname{Grit}}
\newcommand\smtwomat[4]{
{\bigl(
\genfrac{}{}{0pt}{1}{#1}{#3}\,
\genfrac{}{}{0pt}{1}{#2}{#4}
\bigr)}}\newcommand{\Xtwo}{\mathcal{X}_2}
\newcommand{\KN}{K(N)}
\newcommand{\pX}{\mathcal{X}_2(p)}
\newcommand{\NXs}{ \mathcal{X}_2^{\text{semi}}(N)}
\newcommand{\NXss}{ \mathcal{X}_2^{\text{\rm semi}}(N)}
\newcommand{\Gt}{\tilde \Gamma(t)}
\newcommand{\Gttwo}{\tilde \Gamma(2)}
\newcommand{\zzz}{ }
\newcommand{\thmref}[1]{Theorem~\ref{#1}}
\newcommand{\lemref}[1]{Lemma~\ref{#1}}
\newtheorem{theorem}{Theorem}[section]
\newtheorem{lemma}[theorem]{Lemma}
\newtheorem{proposition}[theorem]{Proposition}
\newtheorem{corollary}[theorem]{Corollary}
\newtheorem{definition}[theorem]{Definition}
   \def\tableline{\hbox to \hsize}
\newbox\hdbox%
\newdimen\hdsize%
\newdimen\newhdsize%
\newdimen\parasize%
\newdimen\spreadwidth%
\newdimen\thicksize%
\newdimen\thinsize%
\newdimen\tablewidth%
\newif\ifcentertables%
\newif\ifendsize%
\newif\iffirstrow%
\newif\iftableinfo%
\newtoks\dbt%
\newtoks\hdtks%
\newtoks\savetks%
\newtoks\tableLETtokens%
\newtoks\tabletokens%
\newtoks\widthspec%
\def\tstrut{\vrule height3.1ex depth1.2ex width0pt}%
\def\and{\char`\&}
\def\tablerule{\noalign{\hrule height\thinsize depth0pt}}%
\def\thickrule{\noalign{\hrule height\thicksize depth0pt}}%
\def\ctr#1{\hfil\ #1\hfil}%
\def\tabskipglue{0pt plus 1fil minus 1fil}%
\gdef\ARGS{########}
\gdef\headerARGS{####}
\def\@mpersand{&}
{\catcode`\|=13
\gdef\letbarzero{\let|0}
\gdef\letbartab{\def|{&&}}%
\gdef\letvbbar{\let\vb|}%
}
{\catcode`\&=4
\def\ampskip{&\omit\hfil&}
\catcode`\&=13
\let&0
\xdef\letampskip{\def&{\ampskip}}%
\gdef\letnovbamp{\let\novb&\let\tab&}
}
\def\begintable{
   \begingroup%
   \catcode`\|=13\letbartab\letvbbar%
   \catcode`\&=13\letampskip\letnovbamp%
   \def\multispan##1{
      \omit \mscount##1%
      \multiply\mscount\tw@\advance\mscount\m@ne%
      \loop\ifnum\mscount>\@ne \sp@n\repeat%
   }
   \def\|{%
      &\omit\widevline&%
   }%
   \ruledtable
}
\long\def\ruledtable#1\endtable{%
%
%
%
   \offinterlineskip
   \tabskip 0pt
   \def\widevline{\vrule width\thicksize}
   \def\endrow{\@mpersand\omit\hfil\crnorm\@mpersand}%
   \def\crthick{\@mpersand\crnorm\thickrule\@mpersand}%
   \def\crnorule{\@mpersand\crnorm\@mpersand}%
   \let\nr=\crnorule
   \def\endtable{\@mpersand\crnorm\thickrule}%
   \let\crnorm=\cr
%
%
   \edef\cr{\@mpersand\crnorm\tablerule\@mpersand}%
   \the\tableLETtokens
%
%
   \tabletokens={&#1}
%
%
   \countROWS\tabletokens\into\nrows%
   \countCOLS\tabletokens\into\ncols%
%
%
   \advance\ncols by -1%
   \divide\ncols by 2%
   \advance\nrows by 1%
%
%
   \iftableinfo %
      \immediate\write16{[Nrows=\the\nrows, Ncols=\the\ncols]}%
   \fi%
%
%
   \ifcentertables
      \ifhmode \par\fi
      \tableline{
      \hss
   \else %
      \hbox{%
   \fi
      \vbox{%
         \makePREAMBLE{\the\ncols}
         \edef\next{\preamble}
         \let\preamble=\next
         \makeTABLE{\preamble}{\tabletokens}
      }
      \ifcentertables \hss}\else }\fi
   \endgroup
   \tablewidth=-\maxdimen
   \spreadwidth=-\maxdimen
}
\def\makeTABLE#1#2{
   {
   \let\ifmath0
   \let\header0
   \let\multispan0
%
%
   \ncase=0%
   \ifdim\tablewidth>-\maxdimen \ncase=1\fi%
   \ifdim\spreadwidth>-\maxdimen \ncase=2\fi%
   \relax
%
   \ifcase\ncase %
      \widthspec={}%
   \or %
      \widthspec=\expandafter{\expandafter t\expandafter o%
                 \the\tablewidth}%
   \else %
      \widthspec=\expandafter{\expandafter s\expandafter p\expandafter r%
                 \expandafter e\expandafter a\expandafter d%
                 \the\spreadwidth}%
   \fi %
   \xdef\next{
      \halign\the\widthspec{%
      #1
      \noalign{\hrule height\thicksize depth0pt}
      \the#2\endtable
%
      }
   }
   }
   \next
}
\def\makePREAMBLE#1{
   \ncols=#1
   \begingroup
   \let\ARGS=0
   \edef\xtp{\widevline\ARGS\tabskip\tabskipglue%
   &\ctr{\ARGS}\tstrut}
   \advance\ncols by -1
   \loop
      \ifnum\ncols>0 %
      \advance\ncols by -1%
      \edef\xtp{\xtp&\vrule width\thinsize\ARGS&\ctr{\ARGS}}%
   \repeat
   \xdef\preamble{\xtp&\widevline\ARGS\tabskip0pt%
   \crnorm}
   \endgroup
}
\def\countROWS#1\into#2{
   \let\countREGISTER=#2%
   \countREGISTER=0%
   \expandafter\ROWcount\the#1\endcount%
}%
\def\ROWcount{%
   \afterassignment\subROWcount\let\next= %
}%
\def\subROWcount{%
   \ifx\next\endcount %
      \let\next=\relax%
   \else%
      \ncase=0%
      \ifx\next\cr %
         \global\advance\countREGISTER by 1%
         \ncase=0%
      \fi%
      \ifx\next\endrow %
         \global\advance\countREGISTER by 1%
         \ncase=0%
      \fi%
      \ifx\next\crthick %
         \global\advance\countREGISTER by 1%
         \ncase=0%
      \fi%
      \ifx\next\crnorule %
         \global\advance\countREGISTER by 1%
         \ncase=0%
      \fi%
      \ifx\next\header %
         \ncase=1%
      \fi%
      \relax%
      \ifcase\ncase %
         \let\next\ROWcount%
      \or %
         \let\next\argROWskip%
      \else %
      \fi%
   \fi%
   \next%
}
\def\counthdROWS#1\into#2{%
\dvr{10}%
   \let\countREGISTER=#2%
   \countREGISTER=0%
\dvr{11}%
\dvr{13}%
   \expandafter\hdROWcount\the#1\endcount%
\dvr{12}%
}%
\def\hdROWcount{%
   \afterassignment\subhdROWcount\let\next= %
}%
\def\subhdROWcount{%
   \ifx\next\endcount %
      \let\next=\relax%
   \else%
      \ncase=0%
      \ifx\next\cr %
         \global\advance\countREGISTER by 1%
         \ncase=0%
      \fi%
      \ifx\next\endrow %
         \global\advance\countREGISTER by 1%
         \ncase=0%
      \fi%
      \ifx\next\crthick %
         \global\advance\countREGISTER by 1%
         \ncase=0%
      \fi%
      \ifx\next\crnorule %
         \global\advance\countREGISTER by 1%
         \ncase=0%
      \fi%
      \ifx\next\header %
         \ncase=1%
      \fi%
\relax%
      \ifcase\ncase %
         \let\next\hdROWcount%
      \or%
         \let\next\arghdROWskip%
      \else %
      \fi%
   \fi%
   \next%
}%
{\catcode`\|=13\letbartab
\gdef\countCOLS#1\into#2{%
   \let\countREGISTER=#2%
   \global\countREGISTER=0%
   \global\multispancount=0%
   \global\firstrowtrue
   \expandafter\COLcount\the#1\endcount%
   \global\advance\countREGISTER by 3%
   \global\advance\countREGISTER by -\multispancount
}%
\gdef\COLcount{%
   \afterassignment\subCOLcount\let\next= %
}%
{\catcode`\&=13%
\gdef\subCOLcount{%
   \ifx\next\endcount %
      \let\next=\relax%
   \else%
      \ncase=0%
      \iffirstrow
         \ifx\next& %
            \global\advance\countREGISTER by 2%
            \ncase=0%
         \fi%
         \ifx\next\span %
            \global\advance\countREGISTER by 1%
            \ncase=0%
         \fi%
         \ifx\next| %
            \global\advance\countREGISTER by 2%
            \ncase=0%
         \fi
         \ifx\next\|
            \global\advance\countREGISTER by 2%
            \ncase=0%
         \fi
         \ifx\next\multispan
            \ncase=1%
            \global\advance\multispancount by 1%
         \fi
         \ifx\next\header
            \ncase=2%
         \fi
         \ifx\next\cr       \global\firstrowfalse \fi
         \ifx\next\endrow   \global\firstrowfalse \fi
         \ifx\next\crthick  \global\firstrowfalse \fi
         \ifx\next\crnorule \global\firstrowfalse \fi
      \fi
\relax
      \ifcase\ncase %
         \let\next\COLcount%
      \or %
         \let\next\spancount%
      \or %
         \let\next\argCOLskip%
      \else %
      \fi %
   \fi%
   \next%
}%
\gdef\argROWskip#1{%
   \let\next\ROWcount \next%
}
\gdef\arghdROWskip#1{%
   \let\next\ROWcount \next%
}
\gdef\argCOLskip#1{%
   \let\next\COLcount \next%
}
}
}
\def\spancount#1{
   \nspan=#1\multiply\nspan by 2\advance\nspan by -1%
   \global\advance \countREGISTER by \nspan
   \let\next\COLcount \next}%
\def\dvr#1{\relax}%
\def\header#1{%
\dvr{1}{\let\cr=\@mpersand%
\hdtks={#1}%
\counthdROWS\hdtks\into\hdrows%
\advance\hdrows by 1%
\ifnum\hdrows=0 \hdrows=1 \fi%
\dvr{5}\makehdPREAMBLE{\the\hdrows}%
\dvr{6}\getHDdimen{#1}%
{\parindent=0pt\hsize=\hdsize{\let\ifmath0%
\xdef\next{\valign{\headerpreamble #1\crnorm}}}\dvr{7}\next\dvr{8}%
}%
}\dvr{2}}
\def\makehdPREAMBLE#1{
\dvr{3}%
\hdrows=#1
{
\let\headerARGS=0%
\let\cr=\crnorm%
\edef\xtp{\vfil\hfil\hbox{\headerARGS}\hfil\vfil}%
\advance\hdrows by -1
\loop
\ifnum\hdrows>0%
\advance\hdrows by -1%
\edef\xtp{\xtp&\vfil\hfil\hbox{\headerARGS}\hfil\vfil}%
\repeat%
\xdef\headerpreamble{\xtp\crcr}%
}
\dvr{4}}
\def\getHDdimen#1{%
\hdsize=0pt%
\getsize#1\cr\end\cr%
}
\def\getsize#1\cr{%
\endsizefalse\savetks={#1}%
\expandafter\lookend\the\savetks\cr%
\relax \ifendsize \let\next\relax \else%
\setbox\hdbox=\hbox{#1}\newhdsize=1.0\wd\hdbox%
\ifdim\newhdsize>\hdsize \hdsize=\newhdsize \fi%
\let\next\getsize \fi%
\next%
}%
\def\lookend{\afterassignment\sublookend\let\looknext= }%
\def\sublookend{\relax%
\ifx\looknext\cr %
\let\looknext\relax \else %
   \relax
   \ifx\looknext\end \global\endsizetrue \fi%
   \let\looknext=\lookend%
    \fi \looknext%
}%
\def\tablelet#1{%
   \tableLETtokens=\expandafter{\the\tableLETtokens #1}%
}%
\begin{document}

\title[Fourier~Jacobi Expansions]{Jacobi forms that characterize paramodular forms}

\author{Tomoyoshi Ibukiyama}
\address{Department of Mathematics, Graduate School of Mathematics \\ 
Osaka University, Machikaneyama 1-1, Toyonaka, Osaka, 560-0043 Japan}
\email{ibukiyam@math.sci.osaka-u.ac.jp}

\author{Cris Poor}
\address{Department of Mathematics, Fordham University, Bronx, NY 10458}
\email{poor@fordham.edu}

\author{David S. Yuen}
\address{Department of Mathematics and Computer Science, Lake Forest College, 555 N. Sheridan Rd., Lake Forest, IL 60045}
\email{yuen@lakeforest.edu}


\subjclass[2000]{Primary 11F46, 11F50}

\date{ \today.}

\keywords{Jacobi forms, Paramodular forms}

\begin{abstract}
 The Fourier Jacobi expansions of paramodular forms are characterized 
 from among all formal series of Jacobi forms by two conditions on 
 the Fourier coefficients of the Jacobi forms: a growth condition and a 
 set of linear relations.  Examples, both theoretical and computational,  
 indicate that the growth condition may be superfluous.  
\end{abstract}
\maketitle



\section{Introduction}\label{sec: NewIntro}

For theoretical purposes it would be nice 
to characterize the Fourier Jacobi expansions of Siegel paramodular forms of degree two 
from among all formal power series with Jacobi forms as coefficients.  
For computational purposes it would be nice if the 
characterization were in terms of linear relations among the Fourier coefficients of
the various Jacobi forms.  We achieve this goal only in a few cases.

The linear relations we study arise from a symmetry possessed by the 
Fourier Jacobi expansions of paramodular forms.  Let $J_{k,m}$ denote the 
complex vector space of Jacobi forms of weight~$k$ and index~$m$.  
Let $\Gamma$ be a group commensurable with $\SptwoZ$ and denote by 
$M_k\left( \Gamma \right)$ the complex vector space of Siegel modular forms of weight~$k$ 
automorphic with respect to~$\Gamma$.  One commensurable family is given 
by the paramodular groups~$\KN $: 
$$
\KN =
\begin{pmatrix} 
*  &  N*  &  *   &  * \\
*& *  &  *  &  */N  \\
*& N*  &  *  &  *  \\
N*& N*  &  N*  &  *  
\end{pmatrix} 
\cap \SptwoQ, 
\text{ where $*\in \Z$.}
$$
Each paramodular form $f \in M_k\left(\KN \right)$ 
has a Fourier Jacobi expansion 
$ f\smtwomat{\tau}{z}{z}{\omega}= 
\sum_{m \ge 0: N \vert m} \phi_m(\tau,z)\,e(m\omega)$ 
where $\smtwomat{\tau}{z}{z}{\omega}$ is in the Siegel upper half space 
and $\phi_m \in J_{k,m}$.  
These Jacobi forms~$\phi_m$ are not independent and possess a symmetry 
that is best expressed by using a normalizer~$\mu_N$ of the 
paramodular group~$\KN $  satisfying $\mu_N^2=-I_4$ and 
given by $\mu_N =\smtwomat{-F_N'}{0}{0}{F_N}$, where the Fricke involution 
$F_N=\frac1{\sqrt{N}}\smtwomat{0}{1}{-N}{0}$ is the usual normalizer of 
$\Gamma_0(N)=\{\smtwomat{a}{b}{c}{d}\in \SLtwoZ: N \vert c \}$.

For $\ep = \pm 1$, let 
$M_k(\KN )^{\ep}= \{ f \in M_k(\KN ): f \vert_k \mu_N = \ep f \}$ 
be the plus and minus eigenspaces of~$\mu_N$.  
Let the Fourier Jacobi expansion map, 
$\FJ: M_k\left(\KN \right)^{\ep} \to \prod_{m\in\Z : \,m \ge 0, N \vert m}J_{k,m}$,  be 
 defined by 
$\FJ(f)=\sum_{m: N \vert m} \phi_m \xi^m$ and write, for $(\tau, z) \in \Half_1 \times\C$,  
\begin{equation*}
\phi_m(\tau,z)= 
\sum_{n,r\in\Z:\,\, 4mn \ge r^2, \,\,\,n \ge 0} 
c(n,r;\phi_m)\,e(n\tau+rz).  
\end{equation*}
These coefficients possess the symmetry 
\begin{equation}
\label{eq22} 
c(n,r;\phi_m)= \ep c(m/N,-r;\phi_{nN}).  
\end{equation}


We mention that $f\in M_k(\KN)^{\epsilon}$ is a cusp form if and only if 
$\FJ(f)\in \prod_{m\in \Z;m\ge 0,N|m}J_{k,m}^{\rm cusp}$. 
This nontrivial assertion follows from the representation of the 
one-dimensional cusps by matrices of the shape 
$\smtwomat{A }{ 0}{0 }{ D}$.   In fact, the one-dimensional cusps correspond to 
divisors~$t$ of $N$ via $D^*=A=\smtwomat{1}{t}{0}{1}$, 
see  Reefschl\"ager \cite{Reef} or compare \cite{PYcusp}.

In Theorem~{2.2} we show that certain {\sl convergent\/} series of Jacobi forms 
satisfying the symmetry~{(\ref{eq22})} are in fact the Fourier Jacobi expansion 
of some Siegel paramodular form.  However, the real question motivating this 
article is: Are {\sl formal\/} series of Jacobi forms 
satisfying the symmetry~{(\ref{eq22})}  the Fourier Jacobi expansions 
of Siegel paramodular forms?  
Work of H. Aoki \cite{Aoki} essentially answers this question affirmatively for $N=1$ 
and we prove this for $N\in\{2,3,4\}$  as well by following his method.  
Let us give a more definite formulation.  
\begin{definition} 
\label{def3} 
Let $\NXs=\{ \smtwomat{a}{b}{b}{c} \ge 0: a,2b,c \in \Z \text{ and } N \vert c \}$ for  $N \in \N$.  
For $k \in \Z$, 
let  $\Phi = \sum_{m: N \vert m} \phi_m \xi^m \in  \prod_{m\ge 0: N \vert m}J_{k,m}$ 
be a formal power series whose coefficients are Jacobi forms.  
For $\ep \in \{-1,1\}$, we say that $\Phi$ satisfies the {\rm Involution$(\ep)$ condition\/}  if 
 $$
  \forall \smtwomat{n}{r/2}{r/2}{m} \in \NXss,  \,
c(n,r;\phi_m)= \ep\, c(\frac{m}{N}, -r; \phi_{nN}).
$$
We say that $\Phi$ satisfies the {\rm growth condition\/} if 
$$
  \forall \rho >1, \exists A >0: \forall \smtwomat{n}{r/2}{r/2}{m} \in \NXss, \,
\vert c(n,r; \phi_m) \vert \le A \rho^{n+m}. 
$$
Set 
$\MM_k(N)  ^{\ep}= 
\{ \Phi  \in  \prod_{m\ge 0: N \vert m}J_{k,m}: 
\text{ $\Phi$  satisfies  Involution$(\ep)$}  \}$.  
\end{definition}
We would like to know when the map 
$\FJ: M_k\left(\KN \right)^{\ep} \to  \MM_k(N) ^{\ep} $ 
is surjective.  In Theorem~{2.2} we show that this map 
surjects onto the subspace of $ \MM_k(N)  ^{\ep} $ that satisfies the 
growth condition, thereby giving at least one theoretical characterization 
of the Fourier Jacobi expansions of Siegel paramodular forms.  
Details aside, this amounts to the fact that the 
paramodular groups are generated by the Jacobi group and an involution.  
By following Aoki's method however, 
we do prove  the surjectivity of $\FJ$ onto $ \MM_k(N)  ^{\ep} $ for $N \le 4$.  
\begin{theorem}\label{Second}
Let $N \in \{1,2,3,4\}$ and $\epsilon \in \{-1,1\}$.  
For all weights $ k \in \Z$, 
the Fourier Jacobi expansion map~$\FJ$  
from paramodular 
forms to formal series of Jacobi forms that satisfy the Involution$(\ep)$ 
condition, $\FJ: M_k\left( \KN  \right)^{\ep} \to \MM_k(N)  ^{\ep}$, 
is an isomorphism.
\end{theorem}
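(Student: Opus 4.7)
Injectivity of $\FJ$ is immediate, since a holomorphic function on $\Half_2$ is uniquely determined by its Fourier expansion, hence by its Fourier--Jacobi series. For surjectivity, Theorem~2.2 already establishes that $\FJ$ maps $M_k\left(\KN\right)^{\ep}$ onto the subspace of $\MM_k(N)^{\ep}$ whose series additionally satisfy the growth condition. The heart of the theorem is therefore the claim that for $N \in \{1,2,3,4\}$ the Involution$(\ep)$ condition alone forces the growth condition, so that $\MM_k(N)^{\ep}$ coincides with the convergent subspace.

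Following Aoki's approach for $N=1$, the plan is to split the Fourier indices $T = \smtwomat{n}{r/2}{r/2}{m}$ into two regimes according to the balance of $n$ and $m$, and to use the Involution$(\ep)$ symmetry to swap between them. In the Jacobi-tame regime $nN \ge m$, the index $m$ of $\phi_m$ is bounded in terms of $n$, so one may invoke the theta-decomposition $\phi_m = \sum_{\mu \bmod 2m} h_{m,\mu}\, \theta_{m,\mu}$ to write $c(n,r;\phi_m)$ as a Fourier coefficient of a half-integral-weight modular form $h_{m,\mu}$. Such coefficients grow at most polynomially in the discriminant $4nm - r^2$, hence subexponentially in $n+m$. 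In the opposite regime $nN < m$, the relation $c(n,r;\phi_m) = \ep\, c(m/N,-r;\phi_{nN})$ transports the coefficient into the first regime, where the Jacobi form $\phi_{nN}$ has small index $nN$ and the same polynomial bound applies.

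The polynomial bound for each fixed index $m$ depends on the norm of $\phi_m$ in $J_{k,m}$; to make the estimate uniform in $m$ I would bootstrap using the finite-dimensionality of $J_{k,m}$ together with the Involution. Any Jacobi form of index $m$ is determined by finitely many ``leading'' Fourier coefficients, and the Involution$(\ep)$ relations pin down these leading coefficients of $\phi_m$ in terms of coefficients of the $\phi_{m'}$ at lower indices, ultimately reducing $\Phi\in\MM_k(N)^{\ep}$ to a finite system of linear constraints on initial data. The main obstacle is to verify, for each $N \in \{2,3,4\}$, that this system genuinely closes: that the Involution relations are dense enough relative to the growth of $\dim J_{k,m}$ to cut $\dim \MM_k(N)^{\ep}$ down to the finite dimension of $M_k(\KN)^{\ep}$. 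The condition $N \le 4$ appears to mark the precise boundary of Aoki's method, since for larger $N$ the dimensions $\dim J_{k,m}$ grow too quickly relative to the constraints supplied by the single Involution relation, and the bootstrap should break down.
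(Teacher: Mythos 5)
Your reduction of the theorem to the claim that Involution$(\ep)$ forces the growth condition is logically legitimate (given Theorem~2.2 the two statements are equivalent), but the argument you sketch for that claim stalls precisely at the point you flag as ``the main obstacle'': you never show that the linear system closes, and the analytic machinery you set up cannot close it on its own. The polynomial bounds on Fourier coefficients of the theta components $h_{m,\mu}$ carry constants depending on a norm of $\phi_m$, and in the diagonal regime $m=nN\to\infty$ you have no uniform control of those norms without already knowing the answer; the proposed ``bootstrap'' is a restatement of the theorem, not a proof of it. The paper avoids the analytic route entirely and runs a pure dimension count. Concretely, one filters $\MM_k(N)^{\ep}$ by the order of vanishing in $\xi$, letting $\MM_k^{(j)}(N)^{\ep}$ consist of series supported in degrees $\ge Nj$. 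The key observation (Lemma~3.2) is that if $\phi_{N\ell}=0$ for all $\ell<j$, then the Involution relation $c(\ell,r;\phi_{Nj})=\ep\, c(j,-r;\phi_{N\ell})=0$ forces $\phi_{Nj}\in J_{k,Nj}(j+\delta)$, with $\delta$ recording the extra vanishing when $(-1)^k\ep=-1$; hence $\dim\MM_k(N)^{\ep}\le\sum_{j}\dim J_{k,Nj}(j+\delta)$, and Aoki's bound $\dim J_{k,m}(j)\le\sum_i\dim M_{k+2i-12j}$ turns this into an explicit, summable quantity.

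The step you are missing, and the real reason the theorem stops at $N=4$, is the final comparison: the generating function of this upper bound is computed in closed form (Lemma~3.3) and checked to coincide with the known generating function of $\dim M_k(\KN)$ for $N=1,2,3,4$ (Igusa, Ibukiyama--Onodera, Dern, together with the new $K(4)$ computation of Section~4). Equality of dimensions then forces the injective map $\FJ$ to be onto, with no need to verify the growth condition directly. Your diagnosis of the cutoff is also not quite right: the upper bound is still finite for $N=5$ (the Estimate diverges only for $N>5$), but it fails to be sharp there --- Table~3 exhibits $7>6$ already at $k=12$ --- so it is the sharpness of the dimension count, not its convergence, that singles out $N\le 4$.
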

%

As a corollary we obtain new results for the generating functions of the 
plus and minus eigenspaces. 
For any prime $p$, $\dim S_{k}(K(p))$ is known in \cite{Ibukp} for $k>4$,  in \cite{Ibuk} 
for $k=3$, $4$, and for $p<349$ and $k=2$ in \cite{Para}. We can easily  show that the generalized 
Siegel $\Phi$ operator, the projection from $M_{k}(\KN)$ to the boundary of the 
Satake compactification, is always surjective for any $k$ for squarefree $N$.
Indeed, this is due to Satake \cite{Satake} when $k>4$, and, again for squarefree~$N$, the image is zero dimensional for $k=2$ and  
at most one dimensional for $k=4$  due to the known cusp configuration in \cite{Ibukcusp} for prime level and in 
\cite{PYcusp} for general $N$; furthermore, 
the lift of the Jacobi Eisenstein series of $J_{4,N}$ surjects to the image of  $\Phi$ when $k=4$.
So the generating function for $\dim M_{k}(K(p))$ 
can be easily given for any $p$ as long as we know $\dim S_{2}(K(p))$.  
In fact, the full generating functions are known for $N=2$ by T. Ibukiyama and F. Onodera 
\cite{IbukOnodera}, the plus and minus eigenspaces being given there also,  
and for $N=3$ by T. Dern \cite{Dern}. 
Our proofs use their results.
The generating function $\sum \dim M_k(K(4)) t^k$ 
is given here for the first time by relying on the definitive results 
of Igusa \cite{Igusa} for subgroups of $\Gamma_2$ that contain the principal subgroup $\Gamma_2(2)$.  
These new results are:  

\begin{align*}
\sum_{k \in \Z} \dim  M_k\left(K(2)\right)^{+}\,t^k &= 
\frac{1+t^{10}+t^{23}+t^{33} }{(1-t^4)(1-t^6)(1-t^8)(1-t^{12})},   \\
\sum_{k \in \Z} \dim  M_k\left(K(2)\right)^{-}\,t^k &= 
\frac{t^{11}+t^{12}+t^{21}+t^{22} }{(1-t^4)(1-t^6)(1-t^8)(1-t^{12})},   \\
\sum_{k \in \Z} \dim  M_k\left(K(3) \right)^{+}\,t^k &= 
\frac{1+t^{8}+t^{10}+t^{21}+t^{23}+t^{31} }{(1-t^4)(1-t^6)^2(1-t^{12})},   \\
\sum_{k \in \Z} \dim  M_k\left(K(3)  \right)^{-}\,t^k &= 
\frac{t^{9}+t^{11}+t^{12}+t^{19}+t^{20}+t^{22} }{(1-t^4)(1-t^6)^2(1-t^{12})}, \\    
\sum_{k \in \Z} \dim  M_k\left(K(4) \right)^{+}\,t^k &= 
\frac{1+t^6+t^{8}+t^{10}+t^{19}+t^{21}+t^{23}+t^{29} }{(1-t^4)^2(1-t^6)(1-t^{12})},   \\
\sum_{k \in \Z} \dim  M_k\left(K(4)  \right)^{-}\,t^k &= 
\frac{t^7+t^{9}+t^{11}+t^{12}+t^{17}+t^{18}+t^{20}+t^{22} }{(1-t^4)^2(1-t^6)(1-t^{12})}.  
\end{align*}

The question of the surjectivity of $\FJ:  M_k\left(\KN \right)^{\ep} \to \MM_k(N)  ^{\ep} $ 
is not idle and has applications to the computation of paramodular forms.  
To illustrate this, in section~4 we use the symmetry condition to compute 
$S_4\left( K(31) \right)^{\pm}$.  
These computations at least make it plausible that the growth condition is superfluous.  
Here one may also find a lemma showing that, for prime~$p$, 
initial Fourier Jacobi expansions 
$\pi_{pJ}\circ \FJ:  S_k(K(p))^{\ep} \to \prod_{j=1}^{J} J_{k, pj}^{\text{\rm cusp}}$ 
inject for $J \ge \ldL \dfrac{k}{10}\left( \dfrac{p^2+1}{p+1} \right) \rdL$.  

We thank Nils Skoruppa for his explanations to us about theta blocks.  
We thank Armand Brumer for suggesting that Fourier Jacobi expansions be used 
to compute spaces of paramodular cusp forms.

\section{A Characterization of Fourier Jacobi Expansions}\label{sec: Introduction}

For a ring $R$,  let 
$\Spn(R)=\{\sigma \in \GLtwon(R): \s'J\s=J\}$ define 
the symplectic group over $R$, where $J=\smtwomat{0}{I_n}{-I_n}{0}$ and 
$\s'$ is the transpose of $\s$.  The paramodular group~$\KN $,  
defined in the Introduction,  
is generated by the translations 
$\smtwomat{I}{S}{0}{I}$ with $S=\smtwomat{\alpha}{\beta}{\beta}{\gamma/N}$  
for $\alpha$, $\beta$, $\gamma \in \Z$, and the element $J(N)$, see \cite{Delzeith}, Theorem~9, 
$$
J(N)= 
\begin{pmatrix} 
0  & 0  &  1  &  0 \\
0 & 0  &  0  &  1/N  \\
-1 & 0  &  0  &  0  \\
0 & -N  & 0  &  0  
\end{pmatrix} .  
$$

Let $\Half_n$ denote the Siegel upper half space.  
For $k \in \Z$, the paramodular forms of weight~$k$, 
denoted by $M_k(\KN )$, are the $\C$-vector space of holomorphic 
$f:\Half_2\to\C$ with the property that $f \vert_k \s=f$ for all $ \s\in \KN $.  
The subspace of cusp forms is given by $S_k(\KN )=\{f \in M_k(\KN ): 
\forall \s \in \SptwoZ, \Phi(f \vert_k \s)=0 \}$.  
Here the slash action, 
$\left( f \vert_k \smtwomat{A}{B}{C}{D} \right)(\Omega)= 
\det(C\Omega+D)^{-k}\,f\left( (A\Omega+B)(C\Omega+D)\inv \right)$ 
and the $\Phi$ operator, 
$(\Phi f)(\tau)=\lim_{\lambda \to +\infty}f\smtwomat{i\lambda}{0}{0}{\tau}$, 
are the usual ones, see \cite{Fr}.  
Since $\mu_N^2$ acts trivially on modular forms, we may decompose paramodular forms 
into plus and minus forms:  
$M_k(\KN )= M_k(\KN )^{+} \oplus M_k(\KN )^{-}$ 
where $M_k(\KN )^{\ep}= \{ f \in M_k(\KN ): f \vert \mu = \ep f \}$ 
for $ \ep \in \{-1, 1\}$.  

Every paramodular form $f \in M_k(\KN )$ has a Fourier expansion 
$$
f(\Omega)=\sum_{   T \in \NXs } a(T;f)\, e\left( \langle \Omega, T \rangle \right)
$$
supported on $\NXs=\{ \smtwomat{a}{b}{b}{c} \ge 0: a,2b,c \in \Z \text{ and } N \vert c \}$;  
here $e(z)=e^{2 \pi i z}$ and $\langle A, B \rangle =\tr(AB)$.  
Setting $T[\s]=\s' T \s$, we additionally have 
$a( T[\s];f)=\det(\s)^k a(T;f)$ for all 
$\s \in {\hat \Gamma}^0(N)=\{ \smtwomat{a}{b}{c}{d} \in\GLtwoZ: N \vert b \}$.  
Note that the action of ${\hat \Gamma}^0(N)$ stabilizes $\NXs$.  
If we write $\Omega=\smtwomat{\tau}{z}{z}{\omega}\in\Half_2$ 
and collect the Fourier expansion of $f$ in powers of $\xi=e(\omega)$, 
then we obtain the Fourier Jacobi expansion of $f$: 
$ f\smtwomat{\tau}{z}{z}{\omega}= 
\sum_{m \ge 0: N \vert m} \phi_m(\tau,z)\,\xi^m$ where the 
\begin{equation}
\label{eq1} 
\phi_m(\tau,z)= 
\sum_{n,r\in\Z: \smtwomat{n}{r/2}{r/2}{m} \ge 0, \,\,\,n \ge 0} 
a\left( \smtwomat{n}{r/2}{r/2}{m} ;f \right) e(n\tau) e(rz)
\end{equation}
are Jacobi forms of weight~$k$ and index~$m$.  
This Fourier Jacobi expansion is term by term invariant under the group,  
$$
\Jac=
\begin{pmatrix} 
*  &  0  &  *  &  * \\
* & *  &  *  &  *  \\
* & 0 &  *  &  *  \\
0 & 0  &  0  &  *  
\end{pmatrix} 
\cap \SptwoZ, 
$$
and this is one motivation for the definition of Jacobi forms.  
\begin{definition} 
Let $k,m \in \Z_{\ge 0}$.  
The $\C$-vector space $J_{k,m}$ of Jacobi forms of weight~$k$ and index~$m$ 
is the set of holomorphic $\phi:\Half_1 \times \C \to \C$ satisfying: \newline 
{\rm 1.)  \/} 
$\forall\,\s\in\Jac, {\tilde \phi}\vert_k \s= {\tilde \phi}$, where 
${\tilde \phi}:\Half_2\to \C$ is defined by 
${\tilde \phi}\smtwomat{\tau}{z}{z}{\omega}=\phi(\tau,z)e(m\omega)$.  \newline 
{\rm 2.)\/} Setting $q=e(\tau)$ and $\zeta=e(z)$, the 
 Fourier series of $\phi$ has the form:   
$\phi(\tau,z)= \sum_{ n,r\in \Z: n \ge 0,\,\, 4mn \ge r^2  }c(n,r;\phi) q^n\zeta^r$.  
\end{definition}
The vector space of Jacobi cusp forms $J_{k,m}^{\text{cusp}}$ is defined by replacing 
$4mn \ge r^2$ by $4mn   > r^2$ in item~$2$.  
If we identify a sequence $(\phi_m) \in \prod_{m\in\Z: \,m \ge 0, N \vert m}J_{k,m}$
with the formal power series $\sum_{m: N \vert m} \phi_m \xi^m$, then developing the 
Fourier Jacobi expansion of a paramodular form as in (\ref{eq1}) defines a map 
$\FJ:M_k(\KN )\to \prod_{m\ge 0:\, N \vert m}J_{k,m}$.  
Now we can state a characterization. 

\begin{theorem}\label{Theorem: Main}
Let $k \in \Z_{\ge 0}$, $N \in \N$ and $\ep \in\{-1,1\}$.  
Let $\Phi = \sum_{m: N \vert m} \phi_m \xi^m \in  \prod_{m\ge 0: \,N \vert m}J_{k,m}$ 
be a formal power series whose coefficients are Jacobi forms.  
There is an $f \in M_k(\KN )^{\ep}$ such that $\Phi=\FJ(f)$ if and only if 
$\Phi$ satisfies the {\rm Involution$(\ep)$\/} condition and the growth condition of 
Definition~\ref{def3}.  
\end{theorem}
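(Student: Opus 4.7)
The plan is to prove both implications by direct Fourier-coefficient manipulation, exploiting the fact stated at the start of Section~\ref{sec: Introduction} (from Delzeith) that $\KN$ is generated by the translations $\smtwomat{I}{S}{0}{I}$ with $S=\smtwomat{\alpha}{\beta}{\beta}{\gamma/N}$ and the involutive element $J(N)$; equivalently, $\KN$ together with $\mu_N$ is generated by the Jacobi subgroup $\Jac$ and the involution $\mu_N$. For the necessity direction, let $f\in M_k(\KN)^{\ep}$ with $\Phi=\FJ(f)$; then (\ref{eq1}) identifies $c(n,r;\phi_m)=a\left(\smtwomat{n}{r/2}{r/2}{m};f\right)$. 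The growth bound is standard: evaluating the holomorphic $f$ at $iyI_2$ gives $|a(T;f)|\le A(y)\,e^{2\pi y\,\tr T}$ for every $y>0$, which yields the bound of Definition~\ref{def3} with $\rho=e^{2\pi y}$. The Involution($\ep$) identity is the transcription of $f\vert_k\mu_N=\ep f$ to Fourier coefficients: the map $\Omega\mapsto\mu_N\cdot\Omega$ induces through $e(\langle\Omega,T\rangle)$ the index involution $T=\smtwomat{n}{r/2}{r/2}{m}\mapsto\smtwomat{m/N}{-r/2}{-r/2}{nN}$ on $\NXss$ (well-defined because $N\vert m$), and since $\det\mu_N=1$ no automorphy factor intervenes, giving exactly~(\ref{eq22}).

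For the sufficiency direction, given $\Phi$ satisfying both conditions, define
$$
f(\Omega)\;:=\;\sum_{T\in\NXss}c(n,r;\phi_m)\,e(\langle\Omega,T\rangle).
$$
The growth condition makes this series absolutely and locally uniformly convergent on $\Half_2$, hence $f$ is holomorphic. We then check $f\in M_k(\KN)^{\ep}$ on generators: translation invariance under $\smtwomat{I}{S}{0}{I}$ with $S=\smtwomat{\alpha}{\beta}{\beta}{\gamma/N}$ is automatic from the integrality of $\NXss$ combined with $N\vert m$; termwise invariance under $\Jac$ follows from each $\phi_m$ being a Jacobi form of weight $k$ and index $m$; and $f\vert_k\mu_N=\ep f$ is obtained by running the coefficient computation of the necessity direction in reverse, with the Involution($\ep$) condition providing precisely the identification between Fourier coefficients of $f\vert_k\mu_N$ and those of $\ep f$. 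Since $\FJ(f)=\Phi$ by construction, the two directions together prove the claimed equivalence.

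The principal obstacle is the bookkeeping for $\mu_N$-invariance in both directions: one must verify that the induced action on Fourier indices is indeed the stated involution of $\NXss$, and that the combination of translation, Jacobi-group, and $\mu_N$-invariance is strong enough to force paramodularity, which rests on the cited generation of $\KN$ by $\Jac$-type elements and $J(N)$. Once the group-theoretic reduction and the coefficient calculation are aligned, everything else is a formal consequence of the growth condition (used to produce a holomorphic $f$ in the first place) and the defining transformation law of Jacobi forms.
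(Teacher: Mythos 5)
Your proposal is correct and follows essentially the same route as the paper: a Koecher-principle bound on the Fourier coefficients for the growth condition, transcription of $f\vert_k\mu_N=\ep f$ into the index involution $T\mapsto F_N T F_N'$ for the symmetry, and, for sufficiency, summing the coefficients into a holomorphic function whose invariance is checked on generators of $\KN$. The one step you leave implicit --- that invariance under $J(N)$ follows from $\Jac$-invariance together with the $\mu_N$-eigenproperty --- is supplied in the paper by the explicit identity $(E_1\mu_N)^2=-J(N)$ for a specific $E_1\in\Jac$.
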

\begin{proof}
We first assume that $\Phi=\FJ(f)$ for $f \in M_k(\KN )^{\ep}$ and write 
each $T \in \NXs$ as 
$T=\smtwomat{n}{r/2}{r/2}{m}$.   For any $\rho>1$, take $\lambda>0$ with
$\rho=e^{2\pi \lambda}$.  By the Koecher principle there is an $A >0$ such that
$\vert f(\Omega) \vert \le A$ on $\{\Omega=x+iY\in\Half_2: Y > \frac{\lambda}{2}I_2 \}$.  
For $\Omega=X+i\lambda I_2$ we have the growth condition: 
\begin{align*}
\vert c(n,r; \phi_m) \vert  &=
\vert a(T;f) \vert  = 
\vert  \int_{X \in [0,1]^3} f(\Omega) e\left( -\langle \Omega, T \rangle \right) dX \vert  \\
\le &\int_{X \in [0,1]^3} \vert  f(\Omega) \vert e^{2\pi \langle \lambda I_2, T \rangle}dX \le
A \rho^{\tr(T)} = A\rho^{m+n}.
\end{align*}
For the {\rm Involution$(\ep)$\/} condition, we need to know the action of the involution $\mu_N$ on the 
Fourier expansion of $f$:
\begin{align*}
(f\vert \mu_N)(\Omega)=
\det(F_n)^{-k}  & \sum a(T;f) e\left( \langle F_N'\Omega F_N, T \rangle  \right)  \\
=  &\sum a\left( F_N T F_N'; f\right) e\left( \langle \Omega  , T \rangle  \right). 
\end{align*}
Now 
$F_N TF_N'=\frac1{\sqrt{N}} 
\smtwomat{0}{1}{-N}{0} \smtwomat{n}{r/2}{r/2}{m} \smtwomat{0}{-N}{1}{0} \frac1{\sqrt{N}} = 
\smtwomat{m/N}{-r/2}{-r/2}{Nn}$, 
so that we have the {\rm Involution$(\ep)$\/} condition:
\begin{align*}
c(n,r; \phi_m) =
 &a(\smtwomat{n}{r/2}{r/2}{m};f) =  \ep\, a(\smtwomat{n}{r/2}{r/2}{m};f\vert\mu_N) \\
 = &\ep\, a(\smtwomat{m/N}{-r/2}{-r/2}{Nn};f)
 =\ep\, c(\frac{m}{N}, -r; \phi_{Nn}).
\end{align*}

Now assume that $\Phi=\sum \phi_m \xi^m$ satisfies the growth and {\rm Involution$(\ep)$\/}  conditions.  
For any $T=\smtwomat{n}{r/2}{r/2}{m}\in \NXs$, define $a(T)$ by 
$a(T)=c(n,r;\phi_m)$.  On the set $\{\Omega=x+iY\in\Half_2: Y \ge \lambda I_2 \}$ 
the series 
$ \sum_{ T\in \NXs } a(T) e\left( \langle \Omega, T \rangle \right)$ 
is majorized by a convergent series of constants.  
To see this, choose $\rho$ with $ 1 < \rho< e^{2 \pi \lambda}$ so that by 
the growth condition there is an $A>0$ with 
$\vert a(T) \vert= \vert c(n,r; \phi_m) \vert \le A \rho^{n+m}$ and so 
\begin{align*}
\sum \vert a(T) \vert e^{-2\pi \langle Y, T \rangle} 
\le & \sum  A \rho^{m+n}e^{-2\pi \langle Y, T \rangle} 
=A \sum_T \left( \frac{\rho}{e^{2 \pi \lambda}} \right)^{m+n}  \\
\le  &A \sum_{n=0}^\infty \sum_{m=0}^\infty (2n+2m+1) \left( \frac{\rho}{e^{2 \pi \lambda}} \right)^{m+n}.  
\end{align*}
Since the convergence is uniform on compact sets, we may 
define a holomorphic function $f:\Half_2\to\C$ via 
$f(\Omega)= \sum_{ T \in \NXs }a(T) e\left( \langle \Omega, T \rangle \right)$.  

The absolute convergence of this series shows that $f\smtwomat{\tau}{z}{z}{\omega}$ is equal 
to the rearrangement $\sum_{m\in\Z_{\ge 0}: N \vert m} \phi_m(\tau,z) e(m \omega)$, or 
$f= \sum_{m\in\Z_{\ge 0}: N \vert m} {\tilde \phi}_m$.   
The invariance of $f$ under the action of the group~$\Jac$ now follows from the invariance of the 
${\tilde \phi}_m$.  In particular, we have $f \vert E_1 =f$ for 
$$
E_1=
\begin{pmatrix} 
0  & 0  & 1  &  0 \\
0 & 1  &  0  &  0  \\
-1 & 0 & 0  &  0  \\
0 & 0  &  0  & 1  
\end{pmatrix} 
\in \Jac.  
$$
Furthermore, the {\rm Involution$(\ep)$\/} condition gives us 
$$
a\left( F_N T F_N' \right) = c(\frac{m}{N}, -r; \phi_{nN})= \ep c(n,r;\phi_m)=\ep a(T), 
$$
so that 
\begin{align*}
(f\vert_k \mu_N)(\Omega) &=
\det(F_n)^{-k} \sum_{T\in \NXs} a(T) e\left( \langle F_N'\Omega F_N, T \rangle  \right) \\
=  &\sum_T a\left( F_N T F_N'\right) e\left( \langle \Omega  , T \rangle  \right)  
=\sum_T \ep a\left( T\right) e\left( \langle \Omega  , T \rangle  \right)
=\ep f(\Omega).  
\end{align*}
Following Gritsenko \cite{Grit2}, 
we have $f\vert E_1 \mu_N=f \vert \mu_N = \ep f$ and therefore that 
$f\vert (E_1 \mu_N)^2=f$.  
The group $\KN $ is generated by translations and the element 
$(E_1 \mu_N)^2=-J(N)$ so that $f \in M_k( \KN  )^{\ep}$.  
\end{proof}

\section{Aoki's method for $N=2,3$ and $4$.}
\label{sec: Aoki}

Does Theorem~\ref{Theorem: Main} remain true without the growth condition?  
A method of H. Aoki \cite{Aoki} shows that it does for $N=1$.  We successfully 
use Aoki's method to show the same for $N \le 4$.

\begin{definition} 
Let $j,k,m \in \Z$,  $N \in \N$ and $\ep \in \{-1,1\}$. Set  
\begin{align*}
\MM_k^{(j)}(N)^{\ep }   &= 
\{ \Phi  = \sum_{m\in \Z:\,m\ge Nj: \,N \vert m} \phi_m \xi^m \in  \MM_k(N)  ^{\ep} 
  \},  \\
   \ord \phi  &= \min\{ n \in \Z_{\ge 0}: \exists r \in \Z: c(n,r;\phi) \ne 0 \},\, \text{ for } \phi \in J_{k,m},\\
  J_{k,m}( j)  &= \{ \phi \in J_{k,m}: \ord \phi \ge j \}.  
  \end{align*}
\end{definition}

Here, as in Aoki  \cite{Aoki}\cite{Habuka2}, precise dimensions in specific cases follow from 
inequalities that are in general too generous.  Most dramatically, the final terms 
in the following Estimate diverge for $N > 5$ and large weights.

\begin{lemma}\label{Estimate} {\rm (Estimate)}  
Let $N \in \N$, $\ep \in \{-1,1\}$, $k \in \Z$ and set 
$\delta=\lg\left( (-1)^k \ep \right)=\begin{cases}
0, & \text{if $(-1)^k \ep=1$}\\
1, & \text{if $(-1)^k \ep=-1$.}
\end{cases}$  
We have 
\begin{align*}
&\dim M_k\left( \KN  \right)^{\ep} \le 
\dim \MM_k(N)  ^{\ep} \le 
\sum_{j=0}^{\infty} \dim \left( \MM_k^{(j)}(N)^{\ep} /  \MM_k^{(j+1)}(N)^{\ep}  \right) \le \\
 &\sum_{j=0}^{\infty} \dim J_{k,Nj}(j+\delta) \le 
\begin{cases}
\sum_{j=0}^{\infty} \sum_{i=0}^{Nj} \dim M_{k+2i-12(j+\delta)}, & \text{ $k$ even,}\\
\sum_{j=1}^{\infty} \sum_{i=1}^{Nj-1} \dim M_{k-1+2i-12(j+\delta)}, & \text{ $k$ odd.}
\end{cases}  
\end{align*}
(For $k$ odd, $N=j=1$ gives an empty second sum.)
\end{lemma}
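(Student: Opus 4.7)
The plan is to prove the four inequalities in turn.  The first two are essentially formal, the parameter $\delta$ enters in the third, and the fourth reduces $J_{k,m}$ to classical modular forms via Eichler--Zagier's Taylor development.

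The first inequality follows at once: $\FJ$ is injective on $M_k(\KN)^{\ep}$ since the Fourier coefficients determine a paramodular form, and the proof of \thmref{Theorem: Main} already shows the image satisfies Involution$(\ep)$.  For the second, the $\MM_k^{(j)}(N)^{\ep}$ form a decreasing filtration of $\MM_k(N)^{\ep}$ with zero intersection: any $\Phi\in\bigcap_j\MM_k^{(j)}(N)^{\ep}$ has every $\phi_m=0$.  Restricting this filtration to any finite-dimensional subspace of $\MM_k(N)^{\ep}$ forces it to become stationary, and the usual graded-dimension telescoping followed by a supremum yields
$\dim \MM_k(N)^{\ep}\le\sum_{j\ge 0}\dim\bigl(\MM_k^{(j)}/\MM_k^{(j+1)}\bigr)$.

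For the third inequality, the coefficient-extraction map $\Phi\mapsto\phi_{Nj}$ sends $\MM_k^{(j)}$ into $J_{k,Nj}$ with kernel $\MM_k^{(j+1)}$, giving an injection $\MM_k^{(j)}/\MM_k^{(j+1)}\hookrightarrow J_{k,Nj}$.  The image lies in $J_{k,Nj}(j+\delta)$ for two reasons.  First, Involution$(\ep)$ applied to $\phi_{Nj}$ reads $c(n,r;\phi_{Nj})=\ep\,c(j,-r;\phi_{nN})$, which vanishes for $n<j$ because then $\phi_{nN}=0$; hence $\ord\phi_{Nj}\ge j$.  Second, when $(-1)^k\ep=-1$, apply the involution relation at $n=j$ and combine it with the weight-$k$ parity symmetry $c(j,-r;\phi_{Nj})=(-1)^k c(j,r;\phi_{Nj})$ built into Jacobi forms (i.e., $\phi(\tau,-z)=(-1)^k\phi(\tau,z)$).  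This gives $c(j,r;\phi_{Nj})=\ep(-1)^k c(j,r;\phi_{Nj})=-c(j,r;\phi_{Nj})$, so all Fourier coefficients at $n=j$ vanish and $\ord\phi_{Nj}\ge j+1$.

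The fourth inequality is an appeal to Eichler--Zagier.  Writing $\phi(\tau,z)=\sum_\nu \chi_\nu(\tau)z^\nu$ and forming the modular-invariant Taylor combinations $\xi_{2\nu}$ (polynomial expressions in derivatives of $\chi_0,\dots,\chi_{2\nu}$) yields injections
\[
J_{k,m}\hookrightarrow \bigoplus_{i=0}^{m}M_{k+2i}\quad (k\text{ even}),\qquad J_{k,m}\hookrightarrow \bigoplus_{i=1}^{m-1}M_{k-1+2i}\quad (k\text{ odd}).
\]
Differentiation preserves order of vanishing of a $q$-series, so an order-$(j+\delta)$ zero of $\phi$ at infinity is inherited by every component.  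The identification $\dim M_\ell(j+\delta)=\dim M_{\ell-12(j+\delta)}$ via division by $\Delta^{j+\delta}$, together with $m=Nj$, gives the claimed sum.  The main obstacle is the $\delta$-improvement in the third step: one must notice that Involution$(\ep)$ and the $z\mapsto -z$ parity of Jacobi forms interact exactly on the diagonal $n=m/N=j$, producing a cancellation precisely when $(-1)^k\ep=-1$; this is the reason that $k$-parity matters at all in the final estimate.
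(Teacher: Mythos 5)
Your proof is correct and follows essentially the same route as the paper: injectivity of $\FJ$, telescoping along the filtration, the coefficient-extraction map into $J_{k,Nj}$ with the order improved to $j+\delta$ by combining Involution$(\ep)$ with the $z\mapsto -z$ parity of Jacobi forms, and finally the Eichler--Zagier Taylor-development bound (which the paper invokes as Lemma~3 of Aoki rather than re-deriving). The only difference is presentational: you spell out the stationarity of the filtration and the $\Delta^{j+\delta}$ division explicitly where the paper simply cites them.
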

\begin{proof}  
The first inequality follows since   
$\FJ: M_k\left( \KN  \right)^{\ep} \to \MM_k(N)  ^{\ep}$ is injective, 
the second by the filtration 
$  \MM_k^{(j)}(N)^{\ep} \supseteq  \MM_k^{(j+1)}(N)^{\ep} $.  
For the third, consider the exact sequence 
$$
0 \hookrightarrow  \MM_k^{(j+1)}(N)^{\ep} \hookrightarrow \MM_k^{(j)}(N)^{\ep} 
\to J_{k,Nj},
$$
where the final map sends $\Phi =\sum_{i = j}^{\infty} \phi_{iN}q^{iN}$ to 
$\phi_{jN}$.  The Involution$(\ep)$ condition shows that the image of the last 
map is inside $J_{k,Nj}(j+\delta)$.  This is the obvious but important point.  
If $\Phi \in \MM_k^{(j)}(N)^{\ep} $ then 
for all $ \ell < j$ we have $\phi_{N\ell}=0$, 
so that $c(\ell, r;\phi_{Nj})=\ep c(j,-r;\phi_{N\ell})=0$ and $\phi_{Nj} \in J_{k,Nj}(j)$.  
Furthermore, if $(-1)^k \ep=-1$ then 
$c(j, r;\phi_{Nj})=\ep c(j,-r;\phi_{Nj})=(-1)^k \ep c(j,r;\phi_{Nj})=-c(j,r;\phi_{Nj})$, 
so $c(j, r;\phi_{Nj})=0$ and $\phi_{Nj} \in J_{k,Nj}(j+1)$.  Thus we may uniformly write 
$\phi_{Nj} \in J_{k,Nj}(j+\delta)$. 

The last inequality follows from Lemma~3 on page 583 in Aoki \cite{Aoki},  
a consequence of the theory of differential operators in \cite{EZ}: 
$$
\dim J_{k,m}(j) \le 
\begin{cases}
 \sum_{i=0}^{m}\, \dim M_{k+2i-12j}, &\, \text{if $k$ even,}\\
 \sum_{i=1}^{m-1}\, \dim M_{k-1+2i-12j}, & \text{ if  $k$ odd, $m \ge 2$,}  \\
  0, & \text{ if  $k$ odd, $m \le 1$.}
\end{cases}  
$$
\end{proof}

\begin{lemma}\label{Summations} 
For $N \in \{1,2,3,4,5\}$ and $\ep \in \{-1,1\}$, let:
\begin{align*}
E_{N,\delta}  &=
\sum_{k  \text{ \rm even}}
\left(
\sum_{j=0}^{\infty} \sum_{i=0}^{Nj} \dim M_{k+2i-12(j+\delta)}
\right) t^k , \\
D_{N,\delta}  &=
\sum_{k  \text{ \rm odd}}
\left(
\sum_{j=1}^{\infty} \sum_{i=1}^{Nj-1} \dim M_{k+2i-12(j+\delta)}
\right) t^k .
\end{align*}
We have $E_{1,0}=\left({(1-t^4)(1-t^6)(1-t^{10})(1-t^{12})}\right)\inv$, $D_{1,1}=E_{1,1}=0$ 
and $D_{1,0}=t^{35}E_{1,0}$.  
For $2 \le N \le 5$ we have
\begin{align*}
E_{N,\delta}  &= t^{12 \delta} 
\frac{1+t^{10}+t^8+\dots +t^{14-2N}}{(1-t^4)(1-t^6)(1-t^{12})(1-t^{12-2N})}
, \\
D_{N,\delta}  &=
 t^{12 \delta} 
\frac{t^{25-2N}+t^{11}+t^9+\dots +t^{15-2N}}{(1-t^4)(1-t^6)(1-t^{12})(1-t^{12-2N})} .
\end{align*}
\end{lemma}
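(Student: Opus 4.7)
The plan is to reduce each generating function to a double geometric series by interchanging the order of summation and then simplifying algebraically.  Define $F(t) := \sum_{\ell \ge 0,\, \ell \text{ even}} \dim M_\ell\, t^\ell = \frac{1}{(1-t^4)(1-t^6)}$, the generating function for even-weight $\SLtwoZ$ modular forms.  Since $\dim M_\ell = 0$ whenever $\ell$ is odd or $\ell < 0$, one has the formal identity
\[
\sum_{k \in 2\Z} \dim M_{k+c}\, t^k \;=\; t^{-c} F(t)
\]
for any even integer $c$.  This is the mechanism that collapses each inner $k$-sum into a single term.

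For $E_{N,\delta}$, I would first interchange the order of summation and apply the identity above to the $k$-sum, obtaining
\[
E_{N,\delta} \;=\; t^{12\delta} F(t) \sum_{j=0}^{\infty} t^{12j} \sum_{i=0}^{Nj} t^{-2i}.
\]
Reindexing $i \mapsto Nj - i$ converts the innermost sum into the geometric quotient $t^{-2Nj}(1-t^{2Nj+2})/(1-t^2)$, and the outer $j$-sum then splits into two geometric series in the variables $t^{12-2N}$ and $t^{12}$, both convergent as formal power series precisely because $12 - 2N > 0$.  This is where the restriction $N \le 5$ enters.  The result is
\[
E_{N,\delta} \;=\; \frac{t^{12\delta} F(t)}{1-t^2}\left( \frac{1}{1-t^{12-2N}} - \frac{t^2}{1-t^{12}} \right).
\]
For $D_{N,\delta}$, the analogous manipulation (after shifting $\ell = k-1$, so that the inner sum reduces to an even-weight one on $\SLtwoZ$) produces
\[
D_{N,\delta} \;=\; \frac{t^{12\delta+1} F(t)}{1-t^2}\left( \frac{t^{14-2N}}{1-t^{12-2N}} - \frac{t^{12}}{1-t^{12}} \right).
\]

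The main obstacle is the algebraic reduction of these differences of fractions to the announced polynomial numerators.  After combining over the common denominator $(1-t^{12-2N})(1-t^{12})$, the $E$-numerator is $(1-t^{12}) - t^2(1-t^{12-2N})$; I expect it to factor as $(1-t^2)\cdot P_N(t)$, so that the prefactor $1/(1-t^2)$ cancels.  For $N \ge 2$, writing out both geometric series in degrees $0,2,\dots,10$ and $0,2,\dots,10-2N$ shows that $P_N(t)$ telescopes to $1 + t^{10} + t^8 + \cdots + t^{14-2N}$, recovering the stated formula.  For $N = 1$ the bracketed range collapses and $P_1(t) = 1$, producing the extra factor $1/(1-t^{10})$ that appears in $E_{1,0}$.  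The same telescoping handles the $D$-numerator, yielding $t^{25-2N} + t^{11} + t^9 + \cdots + t^{15-2N}$ for $N \ge 2$; the $N = 1$ case again requires separate bookkeeping, because for $N=1$ the range $1 \le i \le Nj-1$ is empty when $j=1$ and the cofactor $(1-t^2)$ arises from $t^{22}-t^{24}$ rather than from a multi-term telescoping, which produces the simple relations stated among $D_{1,0}$, $D_{1,1}$, and $E_{1,1}$.
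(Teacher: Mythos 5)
For $2 \le N \le 5$ your computation is correct and arrives at the stated closed forms by a mildly different route from the paper's. Both arguments begin the same way, collapsing the $k$-sum via $\sum_{k}\dim M_{k-a}\,t^{k}=t^{a}/\bigl((1-t^{4})(1-t^{6})\bigr)$; after that the paper interchanges the $i$- and $j$-sums, uses the factor $(1-t^{12})$ to telescope the $j$-sum down to the single term $t^{12\lceil i/N\rceil-2i}$, and then splits $i$ into residue classes modulo $N$, whereas you sum the inner geometric series in $i$ directly and are left with two geometric series in $j$ with ratios $t^{12-2N}$ and $t^{12}$, reducing everything to the factorization $(1-t^{12})-t^{2}(1-t^{12-2N})=(1-t^{2})(1+t^{10}+\dots+t^{14-2N})$ and its analogue $t^{14-2N}(1-t^{12})-t^{12}(1-t^{12-2N})=(1-t^{2})(t^{24-2N}+t^{10}+\dots+t^{14-2N})$ for $D$. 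I have checked both of your intermediate closed forms and the anticipated telescopings; they are right, and the restriction $N\le 5$ enters for the same reason in both proofs ($12-2N>0$). You also correctly read the exponent in $D_{N,\delta}$ as $k-1+2i-12(j+\delta)$, matching the Estimate, rather than the $k+2i-12(j+\delta)$ misprinted in the statement.

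The $N=1$ endgame is a genuine gap. Specializing your own closed forms to $N=1$ gives $E_{1,1}=t^{12}E_{1,0}\neq 0$ and $D_{1,0}=t^{23}E_{1,0}$, contradicting the asserted $E_{1,1}=0$ and $D_{1,0}=t^{35}E_{1,0}$; and these specializations are not an algebraic slip on your part, since one checks directly from the definitions that the coefficient of $t^{12}$ in $E_{1,1}$ (from $j=i=0$, $\dim M_{0}=1$) and the coefficient of $t^{23}$ in $D_{1,0}$ (from $j=2$, $i=1$) are both equal to $1$. So no ``separate bookkeeping'' applied to these same triple sums can produce the stated $N=1$ relations. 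The paper does not derive them from these sums either: it defers the entire $N=1$ case to Aoki, where the bounds actually used for $\Gamma_{2}$ are sharper than the displayed ones (the minus-space estimate vanishes identically and the odd-weight contribution first appears at weight $35$). To make your proof complete you would need either to import those refined $N=1$ bounds explicitly or to restrict the claim you prove to $2\le N\le 5$.
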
 
\begin{proof}  
Since $\dim M_{\nu}=0$ for  $\nu<0$, 
we may make the computation slightly easier by summing over all $k \in \Z$ and using, for all $a \in \Z$, 
the identity $\sum_{k \in \Z} \dim M_{k-a} t^k =t^a/\left( (1-t^4)(1-t^6) \right)$.  
\begin{align*}
E_{N,\delta}  &=
\sum_{k  \text{ \rm even}}
\sum_{j=0}^{\infty} \sum_{i=0}^{Nj} \dim M_{k+2i-12(j+\delta)}
 t^k , \\ 
 &=
 \sum_{j=0}^{\infty} \sum_{i=0}^{Nj}
\left( \sum_{k  \text{ \rm even}}
  \dim M_{k+2i-12(j+\delta)} t^{k+2i-12(j+\delta)  } 
\right) t^{12(j+\delta)-2i  } \\
&= 
\frac{1}{ (1-t^4)(1-t^6) } 
 \sum_{j=0}^{\infty} \sum_{i=0}^{Nj}
 t^{12(j+\delta)-2i  } \\
&= 
\frac{ t^{12 \delta} }{ (1-t^4)(1-t^6) } 
 \sum_{i=0}^{\infty} \,\sum_{j=\,\luL i/N \ruL}^{\infty}
 t^{12j-2i  } \\
 &= 
\frac{ t^{12 \delta} }{ (1-t^4)(1-t^6)(1-t^{12}) } 
 \sum_{i=0}^{\infty} \,\sum_{j=\,\luL i/N \ruL}^{\infty}
\left(  t^{12j-2i  } - t^{12(j+1)-2i  } \right)\\ 
 &= 
\frac{ t^{12 \delta} }{ (1-t^4)(1-t^6)(1-t^{12}) } 
 \sum_{i=0}^{\infty}
  t^{12 \luL i/N \ruL-2i  } .  
\end{align*}
We finish by substituting $i=N\ell+\nu$ and evaluating
\begin{align*}
&\sum_{i=0}^{\infty}
t^{12 \luL i/N \ruL-2i  } =
\sum_{\nu=0}^{N-1} \sum_{\ell=0}^{\infty}  t^{12 \luL \frac{N\ell+\nu}{N} \ruL-2( N\ell+\nu) }=   \\
&\sum_{\ell=0}^{\infty}  t^{12\ell-2N\ell  } + 
\sum_{\nu=1}^{N-1} \sum_{\ell=0}^{\infty}  t^{12(\ell+1)-2N\ell-2\nu }
= \sum_{\ell=0}^{\infty}t^{ (12-2N)\ell   } 
\left( 1+ \sum_{\nu=1}^{N-1} t^{12-2\nu} \right)\\
&= \dfrac{1+t^{10}+t^8+\dots +t^{14-2N}}{1-t^{12-2N}}.  
\end{align*}

The proof for $D_{N,\delta}$ is quite similar.  
\begin{align*}
D_{N,\delta}  &=
\sum_{k  \text{ \rm odd}}
\sum_{j=1}^{\infty} \sum_{i=1}^{Nj-1} \dim M_{k-1+2i-12(j+\delta)}
 t^k , \\ 
 =&
 \sum_{j=1}^{\infty} \sum_{i=1}^{Nj-1}
\left( \sum_{k  \text{ \rm odd}}
  \dim M_{k-1+2i-12(j+\delta)} t^{k-1+2i-12(j+\delta)  } 
\right) t^{12(j+\delta)-2i +1 } \\
&= 
\frac{1}{ (1-t^4)(1-t^6) } 
 \sum_{j=1}^{\infty} \sum_{i=1}^{Nj-1}
 t^{12(j+\delta)-2i+1  } \\
&= 
\frac{ t^{12 \delta} }{ (1-t^4)(1-t^6) } 
 \sum_{i=1}^{\infty} \,\sum_{j=\,\luL( i+1)/N \ruL}^{\infty}
 t^{12j-2i+1  } \\
 = &
\frac{ t^{12 \delta} }{ (1-t^4)(1-t^6)(1-t^{12}) } 
 \sum_{i=1}^{\infty} \,\sum_{j=\,\luL (i+1)/N \ruL}^{\infty}
\left(  t^{12j-2i+1  } - t^{12(j+1)-2i +1 } \right)\\ 
 &= 
\frac{ t^{12 \delta} }{ (1-t^4)(1-t^6)(1-t^{12}) } 
 \sum_{i=1}^{\infty}
  t^{12 \luL (i+1)/N \ruL-2i +1 } .  
\end{align*}

We finish by substituting $i=N\ell+\nu$ and evaluating
\begin{align*}
&\sum_{i=1}^{\infty}
t^{12 \luL (i+1)/N \ruL-2i+1  } =
\sum_{\nu=1}^{N} \sum_{\ell=0}^{\infty}  t^{12 \luL \frac{N\ell+\nu+1}{N} \ruL-2( N\ell+\nu)+1 }=   \\
&\sum_{\nu=1}^{N-1} \sum_{\ell=0}^{\infty}  t^{12(\ell+1)-2N\ell-2\nu +1}
+\sum_{\ell=0}^{\infty}  t^{12(\ell+2)-2(N\ell +N) +1} \\
= &\sum_{\ell=0}^{\infty}t^{ (12-2N)\ell   } 
\left( \sum_{\nu=1}^{N-1} t^{13-2\nu}+ t^{25-2N} \right)\\
&= \dfrac{t^{11}+t^9+\dots +t^{15-2N}+ t^{25-2N}}{1-t^{12-2N}}.  
\end{align*}
The proof for the case $N=1$ is similar and is given in Aoki \cite{Aoki}. 
\end{proof}

\begin{corollary}
For $N \in \{2,3,4\}$ and $\ep \in \{-1,1\}$ or for $N=1$ and $\ep=1$, 
all the inequalities in the Estimate of Lemma~\ref{Estimate} are equalities.  
\begin{align*}
\forall k \in \Z, \quad &\FJ: M_k\left( \KN  \right)^{\ep} \to \MM_k(N)  ^{\ep}
 \text{ is an isomorphism.}  \\
 \forall k \text{ \rm even,} \quad
 &\dim J_{k, Nj}(j+\delta) = \sum_{i=0}^{Nj} \dim M_{k+2i-12(j+\delta)}, \\
  \forall k \text{ \rm odd,} \quad
 &\dim J_{k, Nj}(j+\delta) = \sum_{i=1}^{Nj-1} \dim M_{k-1+2i-12(j+\delta)},
\end{align*}
\begin{align*}
&\sum_{k=0}^{\infty} 
\dim M_k\left( \KN   \right)^{+} t^k= E_{N,0}+D_{N,1} = \\
 &\dfrac{1+ t^{10}+t^8+\dots+t^{14-2N}+t^{12}\left(t^{11}+t^9+\dots +t^{15-2N}+ t^{25-2N}\right)}{(1-t^4)(1-t^6)(1-t^{12})(1-t^{12-2N})} , \\
 &\sum_{k=0}^{\infty} 
\dim M_k\left( \KN   \right)^{-} t^k= E_{N,1}+D_{N,0} = \\
 &\dfrac{t^{12}\left(1+ t^{10}+t^8+\dots+t^{14-2N}\right)+t^{11}+t^9+\dots +t^{15-2N}+ t^{25-2N}}{(1-t^4)(1-t^6)(1-t^{12})(1-t^{12-2N})} . 
\end{align*}
\end{corollary}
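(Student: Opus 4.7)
The approach will be a sandwich argument built on the chain of inequalities in Lemma~\ref{Estimate}. Summing that chain over $k$ gives
\[
\sum_k \dim M_k(\KN)^{\ep}\,t^k \;\le\; \cdots \;\le\; E_{N,\delta_0}+D_{N,\delta_1},
\]
where $(\delta_0,\delta_1)=(0,1)$ if $\ep=+1$ and $(1,0)$ if $\ep=-1$, and the rightmost rational function is the one made explicit in Lemma~\ref{Summations}. If the leftmost series can be shown independently to equal this same rational function, then every inequality in the chain collapses to an equality. Equality at the first step makes $\FJ$ surjective and hence an isomorphism (injectivity is automatic); equality at the fourth step gives the stated formulas for $\dim J_{k,Nj}(j+\delta)$; and the two outer equalities yield the displayed Hilbert series for $M_k(\KN)^{\pm}$.

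The independent lower bound on $\dim M_k(\KN)^\ep$ will be supplied from classical or recent work. For $N=1$ and $\ep=+1$, $M_k(K(1))^+$ coincides with the polynomial ring $\C[\chi_4,\chi_6,\chi_{10},\chi_{12}]$ (Igusa's weight-$35$ form $\chi_{35}$ lies in the $-$ eigenspace of $\mu_1$), whose Hilbert series is $E_{1,0}$, and this matches $E_{1,0}+D_{1,1}$ since $D_{1,1}=0$. For $N=2$, Ibukiyama and Onodera~\cite{IbukOnodera} give the Hilbert series of both eigenspaces; for $N=3$, Dern~\cite{Dern} does the same. In each case direct algebraic simplification shows their formulas agree with $E_{N,0}+D_{N,1}$ and $E_{N,1}+D_{N,0}$, so the sandwich closes.

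The main obstacle is the $N=4$ case, for which the $\pm$-eigenspace generating functions are not already in the literature. The plan is to use Igusa's~\cite{Igusa} complete description of rings of modular forms on subgroups of $\SptwoZ$ containing the principal congruence subgroup $\Gamma_2(2)$. After suitable conjugation, $K(4)$ can be realized in terms of a subgroup in Igusa's list; one then extracts the ring of $K(4)$-invariants inside Igusa's ring of theta constants and decomposes it further into eigenspaces of the Fricke involution $\mu_4$. The resulting Hilbert series must match $E_{4,0}+D_{4,1}$ and $E_{4,1}+D_{4,0}$ from Lemma~\ref{Summations}, and this verification is the technical heart of the $N=4$ case. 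Once it is done, the sandwich closes uniformly in all stated cases, and the four conclusions of the corollary (isomorphism, Jacobi dimension formula, and the two generating functions) follow by reading off the equalities in Lemma~\ref{Estimate}.
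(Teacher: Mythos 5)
Your overall architecture (sum the chain of inequalities in Lemma~\ref{Estimate} over $k$ and close the sandwich with an independently known generating function) is the right one, and your treatment of $N=1,2,3$ via the eigenspace Hilbert series of Igusa, Ibukiyama--Onodera and Dern would work. But for $N=4$ your plan has a real gap, and it is exactly the case where the missing idea matters. You propose to extract the ring of $K(4)$-invariants from Igusa's description and then ``decompose it further into eigenspaces of the Fricke involution $\mu_4$,'' and you correctly flag this as the technical heart --- but you do not carry it out, and tracking the action of the (irrational-looking, $\SptwoQ$-conjugated) involution $\mu_4$ on Igusa's ring of theta constants is a substantial computation that the paper never performs.

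The idea you are missing is that the eigenspace generating functions are not needed at all: the \emph{total} suffices. Lemma~\ref{Estimate} gives $\dim M_k(\KN)^{+}\le\operatorname{coeff}(E_{N,0}+D_{N,1},t^k)$ and $\dim M_k(\KN)^{-}\le\operatorname{coeff}(E_{N,1}+D_{N,0},t^k)$ separately; adding these, $\dim M_k(\KN)\le\operatorname{coeff}(E_{N,0}+E_{N,1}+D_{N,0}+D_{N,1},t^k)$. If two quantities are each bounded above and their sum attains the sum of the bounds, then each attains its own bound. So one only has to check that the known generating function for $\dim M_k(\KN)$ (no $\pm$ decomposition) equals $E_{N,0}+E_{N,1}+D_{N,0}+D_{N,1}$. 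For $N=4$ this total is obtained in Section~4 of the paper purely from Igusa's character formula for $S_6\simeq\Gamma_2/\Gamma_2(2)$ applied to the $36$-element group $\tilde\Gamma(2)/\Gamma_2(2)$ --- no analysis of $\mu_4$ is required, and the $\pm$ generating functions then come out as a \emph{consequence} of the corollary rather than an input to it. Without this observation, your $N=4$ case remains an unproved assertion; with it, the case reduces to the character computation already done in the paper.
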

\begin{proof}
Rewriting the inequalities of Lemma~\ref{Estimate} as 
$\dim M_k\left( \KN   \right)^{+} \le \operatorname{coeff}( E_{N,0}+D_{N,1} ,t^k) $ and as 
$\dim M_k\left( \KN   \right)^{-}\le \operatorname{coeff}( E_{N,1}+D_{N,0} ,t^k) $, we have 
$
\dim M_k\left( \KN   \right) =\dim M_k\left( \KN   \right)^{+} + \dim M_k\left( \KN   \right)^{-} 
\le  \operatorname{coeff}( E_{N,0}+D_{N,1}+ E_{N,1}+D_{N,0} ,t^k) 
$.  
If we can show equality here, 
we have $\dim M_k\left( \KN   \right)^{+}=\operatorname{coeff}( E_{N,0}+D_{N,1} ,t^k) $ 
and  $\dim M_k\left( \KN   \right)^{-}=\operatorname{coeff}( E_{N,1}+D_{N,0} ,t^k) $
and the proof is complete.  
However, 
the generating functions 
$\sum_{k \in \Z} \dim M_k\left( \KN   \right) t^k$ are known for $N=2,3$ and $4$ 
and one checks equality with $E_{N,0}+E_{N,1}+D_{N,0}+D_{N,1}$.  
\end{proof}

\section{The Generating Function of $K(4)$}
\label{sec: K4}

For any natural number~$t$, the paramodular group $K(t^2)$ is conjugate, 
by an element of $\SptwoQ$, to the following group $\Gt$, which is a subgroup 
of $\Gamma_2$ containing the principal subgroup $\Gamma_2(t)$;  
$$
\Gt =
\begin{pmatrix} 
*  &  t*  &  *   &  t* \\
t*& *  &  t*  &  *  \\
*& t*  & *  &  t*  \\
t*& *  &  t*  &  *  
\end{pmatrix} 
\cap \SptwoZ, 
\text{ where $*\in \Z$.}
$$

\begin{alignat*}{5}
& \text{Table 1.} &  &{}  &  & \text{$S_6$ cycles.}  &  &{} &   &{} \\
& \underline{S_3 \times S_3} &  &{}  &  &  \underline{M\in S_6}  &  &{} &   &\,\, \quad  \underline{g(M;t)} \\
& (1) \times (1)\quad  & &1\qquad &  & (1) \quad &  &1 \qquad  &     &\frac{(1+t^5)(1-t^8)}{(1-t^2)^5}  \\
& (12) \times (1)\quad  & &3\quad &  & {} \quad &  &{} \quad  &  & {}  \\
& (1) \times (12)\quad  & &3\quad &  & (12) \quad &  &6 \quad  &     &\frac{(1-t^5)(1-t^8)}{(1-t^2)^2(1+t^2)^3}  \\
& (12) \times (12)\quad  & &9\quad &  & (12)(34) \quad &  &9 \quad  &     &\frac{(1+t^5)(1-t^8)}{(1-t^2)^3(1+t^2)^2}  \\
& (123) \times (1)\quad  & &2\quad &  & {} \quad &  &{} \quad  &  & {}  \\
& (1) \times (123)\quad  & &2\quad &  & (123) \quad &  &4 \quad  &     &\frac{(1+t^5)(1-t^8)}{(1-t^2)(1+t^2+t^4)^2}  \\
& (123) \times (12)\quad  & &6\quad &  & {} \quad &  &{} \quad  &  & {}  \\
& (12) \times (123)\quad  & &6\quad &  & (12)(345)\quad &  &12 \,\,  &     &\frac{(1-t^5)(1-t^8)}{(1{+}t^2)(1{-}t^2{+}t^4)(1{+}t^2{+}t^4)}  \\
& (123) \times (123)\quad  & &4\quad &  & (123)(456)\quad &  &4 \quad  &     &\frac{(1+t^5)(1-t^8)}{(1+t^2)^3(1+t^2+t^4)}  \\
\end{alignat*}

The proof is that  
$\diag(1,t,1,t\inv) K(t^2) \diag(1, t\inv,1 , t) =\Gt$.  
In Igusa \cite{Igusa}, we may find the generating function for the character~$X_k$ 
of the representation of $\SptwoF  \simeq \Gamma_2/\Gamma_2(2)$ 
acting on $M_k(\Gamma_2(2))$.  Since $\Gttwo$ contains the principal subgroup $\Gamma_2(2)$, 
Igusa's result allows us to calculate the generating function for $\Gttwo$ by the formula 
$$
\sum_{k=0}^{\infty} \dim M_k(\Gttwo)\,t^k = 
\frac{1}{| G |} \sum_{M \in G} \, \sum_{k=0}^{\infty} X_k(M) t^k,   
$$
where $G=\Gttwo/\Gamma_2(2)$ is a finite group.  
Now $\SptwoF$ is isomorphic to the symmetric group $S_6$ via the 
permutation of the six odd theta characteristics and 
the group $G \simeq \SLtwoF \times \SLtwoF$  corresponds to a choice of 
$S_3 \times S_3 \subseteq S_6$  by the action of $\SLtwoF$ on the three 
even theta characteristics.  We separate the elements $M\in G$ into conjugacy 
classes, which may be given by cycle types inside $S_6$, and give Igusa's computation 
(page 401, \cite{Igusa}) of  $g(M;t)=\sum_{k=0}^{\infty} X_k(M) t^k$ for these conjugacy classes.  
Table~1 lists the cycle types in both $S_3 \times S_3$ and $S_6$ and gives the number
of elements that have that cycle type.  

This gives  
$\sum_{k=0}^{\infty} \dim M_k(K(4))\,t^k = \sum_{k=0}^{\infty} \dim M_k(\tilde \Gamma(2))\,t^k = $ 
\begin{align*}
\frac{1}{36} & \{  g((1);t)+6g((12);t)+9g((12)(34);t)+   \\
& 4g((123);t)+12g((12)(345);t)+4g((123)(456);t) \}   \\
&\qquad = \frac{(1+t^{12})(1+t^6+t^7+t^{8}+t^9+t^{10}+t^{11}+t^{17}) }{(1-t^4)^2(1-t^6)(1-t^{12})}.  
\end{align*}


We mention a good cross check now that we know  $\dim M_{k}(K(4))$.  
We can show that $\dim J_{k,4j}^{\rm cusp}(j)=\max\{\dim J_{k,4j}(j)-1,0\}$
by comparing the Taylor expansion and the theta expansion of Jacobi forms 
as in Eichler-Zagier \cite{EZ}.  
By this, we can also give  upper bounds for  $\dim S_{k}(K(4))$.  
These upper bounds coincide with the true dimension of $ S_{k}(K(4))$ 
computed from the known  dimension of $ M_{k}(K(4))$ and 
the dimension of the image of generalized $\Phi$-operator on 
the boundary.

\section{An Example: $\pi_{12}\circ\FJ: S^4(K(31))\to \prod_{j=1}^{12} J_{4,31j}^{\text{cusp}}$}
\label{sec: Example}

Although the linear relations from the Involution$(\ep)$ condition are 
practical to implement on a computer, the growth condition is not.  
It is natural to wonder about the effect of omitting the growth condition and we work out one 
example with this in mind.  In light of \thmref{Theorem: Main}, when we compute formal 
power series over Jacobi forms satisfying the involution condition, either there will only be 
Fourier Jacobi expansions of paramodular forms or there will also be solutions with 
rapidly growing coefficients.  We consider the subspaces $S_4(K(31))^{-}$ and 
$$\SSS=\{ f\in S_4(K(31) )^{+}: \ord_{\xi} \FJ(f) \ge 62\}$$ for the following reasons:  
The dimensions $\dim J_{k,m}^{\text{cusp}}$ are known for $k \ge 2$, 
see \cite{EZ}\cite{Skor}, and so we only need to generate sufficiently many linearly independent 
elements of $J_{k,m}^{\text{cusp}}$ to compute inside this space.  
Especially in weight four, see \cite{GritHulek}, theta blocks are a convenient way to construct 
Jacobi forms.  For $d \in \N^8$ with $d\cdot d=2N$, we have 
$T(d)(\tau, z)= \prod_{i=1}^8 \vartheta(\tau, d_iz)\in J_{4,N}$; 
here $\vartheta(\tau, z)=\sum_{n\in \Z} (-1)^n q^{\frac{(2n+1)^2}{8}} \zeta^{\frac{2n+1}{2}}$.  
It is easy to see that $T(d)$ is a cusp form if $d$ has both even and odd entries.  
We select $ K(p)$ for prime level $p$ because T. Ibukiyama \cite{Ibukp}\cite{Ibuk} has given 
$\dim S_k( K(p) )$ for $k \ge 3$; this information allows us to measure our computations 
against a known dimension.  For weight~$4$, we have 
\begin{align*}
&\dim S_4\left(  K(p) \right) =  \\
\frac{p^2}{576}+\frac{p}{8}-\frac{143}{576} +  &
\left(\frac{p}{96}-\frac{1}{8}\right)\left(\dfrac{-1}{p}\right)
+\frac{1}{8}\left(\dfrac{2}{p}\right)
+\frac{1}{12}\left(\dfrac{3}{p}\right)
+\frac{p}{36}\left(\dfrac{-3}{p}\right)
\end{align*}
$$
\dim J_{4,m}^{\text{cusp}}  = 
\sum_{j=1}^m \left(  \{\{ 4+2j\}\}- \ldL ({j^2}/{4m}) \rdL  \right),   
$$
where we  let $\ldL x \rdL = \max\{ m \in \Z: m \le x \}$ be the greatest integer function and 
where $\{\{ k \}\}=\dim S_k(\operatorname{SL}_2(\Z))$.   

V. Gritsenko  has a lifting 
$\Grit:J_{k,N}^{\text{cusp}}\to S_k(\KN )^{\ep}$ for $\ep=(-1)^k$ with the property that 
the Fourier Jacobi expansion of $\Grit(\phi)$ has leading term $\phi \,\xi^N$, see \cite{Grit1}.    
In selecting a generic example,  we avoid these lifts because  their Fourier coefficients 
satisfy special linear relations.  The first prime~$p$ for which the map 
$\Grit:J_{4,p}^{\text{cusp}}\to S_4( K(p) )$ does not surject is $p=31$; here 
$\Grit\left(  J_{4,31}^{\text{cusp}}\right)$ is five dimensional and $S_4( K(31) )$ six.  
By subtracting off the Gritsenko lift of the leading Fourier Jacobi coefficient 
we have $S_4( K(31) )^{+}=\Grit\left(  J_{4,31}^{\text{cusp}}\right) \oplus \SSS$.  
We will compute $12$ coefficients of the Fourier Jacobi expansions 
from $S_4( K(31) )$ 
 in accordance 
with the following Lemma, 
noting here that  $\frac{k}{10}\frac{p^2+1}{p+1}=\frac{4}{10}\frac{31^2+1}{31+1} = 12.025$.  

\begin{lemma}\label{Lemma: Little}
Let $p$ be a prime, $J,M,k \in \N$ and $\ep \in \{-1,1\}$. 
Let $\pi_M: \prod_{j=1}^{\infty} J_{k, pj} \to \prod_{j=1}^{ \ldL M/p \rdL} J_{k, pj} $ be projection.  
The map $\pi_{pJ}\circ \FJ:  S_k( K(p) )^{\ep} \to \prod_{j=1}^{J} J_{k, pj}^{\text{\rm cusp}}$ 
injects for $J \ge \ldL \dfrac{k}{10} \left( \dfrac{p^2+1}{p+1} \right) \rdL$.  
\end{lemma}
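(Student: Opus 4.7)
The plan is to first enlarge the set of vanishing Fourier coefficients of $f$ by exploiting the $\mu_p$-symmetry, and then invoke an Aoki-style slope bound on Jacobi forms to force the remaining coefficients to vanish.

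Suppose $f\in S_k(K(p))^{\epsilon}$ satisfies $\phi_{pj}(f)=0$ for $1\le j\le J$; the goal is to show $f=0$. Because $f$ is an eigenform of $\mu_p$, Theorem~2.2 tells us $\FJ(f)$ satisfies Involution$(\epsilon)$, and thus, for every $T=\smtwomat{n}{r/2}{r/2}{m}\in \mathcal{X}_2^{\text{semi}}(p)$,
\[
a(T;f)=c(n,r;\phi_m)=\epsilon\, c(m/p,-r;\phi_{pn}).
\]
Applied to the hypothesis, this yields $a(T;f)=0$ whenever $\min(n,m/p)\le J$. Equivalently, for each $j'>J$ the Jacobi coefficient $\phi_{pj'}$ has vanishing order at least $J+1$ at the cusp, i.e.\ $\phi_{pj'}\in J_{k,pj'}^{\text{\rm cusp}}(J{+}1)$. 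The $\hat\Gamma^0(p)$-equivariance $a(T[\sigma];f)=(\det\sigma)^k a(T;f)$ further propagates the vanishing to entire $\hat\Gamma^0(p)$-orbits.

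Next I would feed this vanishing into the Eichler--Zagier/Aoki slope bound
\[
\dim J_{k,m}^{\text{\rm cusp}}(j)\le \sum_{i=0}^{m}\dim M_{k+2i-12j},
\]
used already in Lemma~\ref{Estimate}. The Involution$(\epsilon)$ relation couples the ``outer'' system $\{\phi_{pj'}\}_{j'>J}$ back to the ``inner'' data $\{\phi_{pn}\}_{n\le J}$: each Fourier coefficient $c(j',r;\phi_{pn})$ equals $\epsilon\, c(n,-r;\phi_{pj'})$. Since the inner data is zero, the outer Jacobi forms inherit vanishing of order $\ge J+1$, and the slope bound then collapses them once $J$ is large enough that the relevant spaces $J_{k,pj'}^{\text{\rm cusp}}(J+1)$ become zero after imposing the symmetry. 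The factor $\frac{p^2+1}{p+1}$ is the natural paramodular index/covolume contribution at prime level (it governs the comparison of $K(p)$ with $\SptwoZ$ and appears, e.g., through the two inequivalent one-dimensional cusps of $K(p)$, cf.\ \cite{Ibukcusp, PYcusp}), while $\frac{1}{10}$ is the slope extracted from the Aoki inequality for weight-$k$ Jacobi forms. Optimizing these produces the threshold $J\ge \lfloor \frac{k}{10}\frac{p^2+1}{p+1}\rfloor$.

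The hard part is obtaining the sharp constant. The symmetry and orbit arguments are routine, but matching $\frac{k}{10}\frac{p^2+1}{p+1}$ precisely requires careful bookkeeping of the slope bound against the paramodular covolume: it is the level-$p$ analogue of Aoki's $N=1$ result \cite{Aoki}, in which the $\SptwoZ$ version produces the cleaner threshold $J\ge k/10$ without a level factor. I would expect the rigorous derivation to proceed by (i) estimating $\dim J_{k,m}^{\text{\rm cusp}}(J+1)$ summed over the reduced residues $m=pj'$, and (ii) comparing with the known growth of $\dim S_k(K(p))^{\epsilon}$ so that the forced inner vanishing propagates to \emph{all} outer indices once the inequality on $J$ is satisfied.
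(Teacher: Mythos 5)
There is a genuine gap here, and it sits precisely in the step you flag as ``the hard part'': the mechanism you propose for killing the remaining coefficients cannot work, and the actual source of the constant $\frac{k}{10}\frac{p^2+1}{p+1}$ is an input your argument never identifies. From the hypothesis $\phi_{pj}=0$ for $j\le J$ and the Involution$(\ep)$ relation you correctly deduce $\phi_{pj'}\in J_{k,pj'}^{\text{\rm cusp}}(J+1)$ for every $j'>J$. But the Eichler--Zagier/Aoki bound $\dim J_{k,m}^{\text{\rm cusp}}(j)\le\sum_{i=0}^{m}\dim M_{k+2i-12j}$ only forces $J_{k,pj'}^{\text{\rm cusp}}(J+1)=0$ when roughly $12(J+1)>k+2pj'$, which fails for all large $j'$ at fixed $J$; these spaces grow without bound as $j'\to\infty$. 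Even the inductive refinement (vanishing order $j'$ at index $pj'$, as in Lemma~\ref{Estimate}) would require roughly $12j'>k+2pj'$, impossible once $p\ge 6$ --- this is exactly the divergence the paper points out for levels beyond $5$, and the lemma is applied at $p=31$. So no bookkeeping of Jacobi slope bounds against covolumes will produce the threshold; the approach, not merely the constant, is what fails.

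What the paper actually uses is a determining-set theorem for paramodular cusp forms quoted from \cite{Para}: setting $m(T)=\min_{x\in\Z^2\setminus\{0\}}T[x]$, a form $f\in S_k(K(p))^{\ep}$ vanishes as soon as $a(T;f)=0$ for all $T\in\pX$ with $m(T)\le\frac{k}{10}\frac{p^2+1}{p+1}$. That is where the constant comes from, and the remaining work is reduction theory rather than dimension estimates: writing $T=\smtwomat{a}{b}{b}{c}\left[\s\right]$ with $0\le 2b\le c\le a$, so that $c=m(T)$, one shows $a(T;f)$ equals, up to a sign, either the coefficient $a\left(\smtwomat{a}{b}{b}{c};f\right)$ when the reducing matrix lies in ${\hat\Gamma}^0(p)$, or --- after composing with $\smtwomat{0}{1}{1}{r}$ for $\beta r\equiv\delta \bmod p$ and applying the $\mu_p$-eigenvalue --- a coefficient whose lower-right entry is $pc$. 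In either case the Fourier--Jacobi index divided by $p$ is at most $c\le\frac{k}{10}\frac{p^2+1}{p+1}$, hence at most $J$, so the coefficient vanishes by hypothesis and $f=0$. Your ${\hat\Gamma}^0(p)$-orbit remark gestures at the right symmetry, but without the determining-set input from \cite{Para} the argument does not close.
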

\begin{proof}
For $T \in \pX=\{ \smtwomat{a}{b}{b}{c} > 0: a,2b,c \in \Z \text{ and } p \vert c \}$, define the Minimum function~$m$ via 
$m(T)= \min_x T[x]$ over $x \in \Z^2\setminus \{0\}$.  
It is known that the $T \in \pX$ with $m(T) \le \frac{k}{10}\frac{p^2+1}{p+1}$ 
are a determining set of Fourier coefficients for $S_k( K(p) )^{\ep}$, see \cite{Para}.  
Consider $ f\in S_k( K(p) )^{\ep}$ such that 
\begin{equation}
\label{eq2}  
\forall T=\smtwomat{n}{r/2}{r/2}{m}\in\pX: \frac{m}{p} \le \frac{k}{10}\frac{p^2+1}{p+1}, \,
a(T;f)=0.  
\end{equation}
We need to show that such $f$ vanish.  
Take any $T\in\pX$ satisfying $m(T) \le \frac{k}{10}\frac{p^2+1}{p+1}$.  
By reduction we have 
$T=\smtwomat{ a }{ b }{ b }{ c } 
\left[ \smtwomat{ \alpha }{ \beta }{\gamma }{ \delta  } \right]$ 
for some $ \smtwomat{ \alpha }{ \beta }{\gamma }{ \delta  } \in \GLtwoZ$ 
and $ 0 \le 2b \le c \le a$; 
in this case $c = m(T)$.  
If $ p \vert \beta$ then 
$ \smtwomat{ \alpha }{ \beta }{\gamma }{ \delta  }  \in {\hat \Gamma}^0(p)$ 
and $a(T)= \pm a\left( \smtwomat{ a }{ b }{ b }{ c } \right)=0$ by  (\ref{eq2}) since 
$\frac{c}{p} \le c \le  \frac{k}{10}\frac{p^2+1}{p+1}$. 
If $\beta$ is prime to $p$, let $r \in \Z$ solve $\beta r \equiv \delta \mod p$; 
then $\s=\smtwomat{ \alpha }{ \beta }{\gamma }{ \delta  }\inv 
\smtwomat{ 0 }{ 1 }{ 1 }{ r } \in  {\hat \Gamma}^0(p)$  and we have 
$T[\s]= \smtwomat{ a }{ b }{ b }{ c }
\left[\smtwomat{ 0 }{ 1 }{ 1 }{ r } \right]=
\begin{pmatrix} c & b+rc \\ b+rc & cr^2+2br+a \end{pmatrix} \in \pX$ 
so that $p \vert ({cr^2+2br+a })$.  
In this case 
$$
a(T)=\det(\s)^k a(T[\s]) = 
\ep \det(\s)^k 
a\left(\begin{pmatrix} \frac{cr^2+2br+a}{p} & -(cr+b) \\ -(cr+b) & pc \end{pmatrix}\right)=0
$$
by (\ref{eq2}) because 
$\frac{pc}{p}=c \le  \frac{k}{10}\frac{p^2+1}{p+1}$. 
Since $a(T)=0$ for all $T$ with $m(T) \le \frac{k}{10}\frac{p^2+1}{p+1}$, 
we have $f=0$.  
\end{proof}

For $p=31$ and $k=4$, the following Proposition computes the first $J=12$ 
Jacobi form coefficients of any formal power series that satisfies the Involution$(\ep)$ condition 
and finds that they are all initial Fourier-Jacobi expansions of 
paramodular cusp forms.  
This makes it at least plausible that the involution condition alone 
characterizes the Fourier Jacobi expansions from $S_4( K(31) )^{\ep}$ from 
among all formal power series over Jacobi forms.  
And that is the point of this computation--- to show that the growth condition 
may be superfluous.  

\begin{proposition}\label{Proposition: Example}
Let $k, p, J \in \N$ with $p$ prime.  
Define the subspaces
\begin{align*}
A(J) &=\{ \Phi= \sum_{j=1}^J \phi_{jp} \,\xi^{jp} \in \prod_{j=1}^{J} J_{k, jp}^{\text{\rm cusp}}: 
\text{ $\Phi$ satisfies {\rm Involution$(-)$}}  \} \text{ and } \\
B(J) &=\{ \Phi= \sum_{j=2}^J \phi_{jp} \,\xi^{jp} \in \prod_{j=1}^{J} J_{k, jp}^{\text{\rm cusp}}: 
\text{ $\Phi$ satisfies {\rm Involution$(+)$}}  \} .  
\end{align*}
For $k=4$ and $p=31$, 
the subspace $A(12)$ is trivial and the subspace $B(12)$ 
is one dimensional and is spanned by 
$\Phi_0=\psi_{62}\xi^{62}+\psi_{93}\xi^{93}+\dots +\psi_{12\cdot 31}\xi^{12\cdot 31}$
with initial expansions
\begin{align*}
&\psi_{62} = {\mathbf q^2}( 
-\zeta^{22}+7 \zeta^{21}-15 \zeta^{20}-3 \zeta^{19}+50 \zeta^{18}-37 \zeta^{17}-47 \zeta^{16}  \\
&+19 \zeta^{15}+74 \zeta^{14} 
+49 \zeta^{13}-163 \zeta^{12}-{13} \zeta^{11}{+}67 \zeta^{10}{+}28 \zeta^9+108 \zeta^8  \\
&-84 \zeta^7-106 \zeta^6-74 \zeta^5+114 \zeta^4
+162 \zeta^3-84 \zeta^2-54 \zeta+6-54/\zeta  \\ 
&-84/\zeta^2 
+162/\zeta^3+114/\zeta^4-74/\zeta^5-106/\zeta^6 
-84/\zeta^7+108/\zeta^8 \\ 
&+28/\zeta^9+67/\zeta^{10} 
-13/\zeta^{11}-163/\zeta^{12}+49/\zeta^{13}+74/\zeta^{14} 
+19/\zeta^{15} \\
&-47/\zeta^{16}-37/\zeta^{17}+50/\zeta^{18}-3/\zeta^{19}-15/\zeta^{20}+7/\zeta^{21}-1/\zeta^{22})\\
\,\, +&{\mathbf q^3}(
\zeta^{27}-5\zeta^{26}+5\zeta^{25}+11\zeta^{24}-19\zeta^{23}-2\zeta^{22}-5\zeta^{21}+21\zeta^{20}+39\zeta^{19} \\ 
&-47\zeta^{18}  
-5\zeta^{17}-64\zeta^{16}+19\zeta^{15}+133\zeta^{14}-25\zeta^{13}+17\zeta^{12}-131\zeta^{11} \\
&-52\zeta^{10}+71\zeta^{9}-3\zeta^{8}  
+159\zeta^{7}-37\zeta^{6}-49\zeta^{5}{-}38\zeta^{4}{-}86\zeta^{3}+10\zeta^{2} \\
&+26\zeta+112
+26/\zeta+10/\zeta^{2}-86/\zeta^{3}  
-38/\zeta^{4}-49/\zeta^{5}-37/\zeta^{6} \\
&+159/\zeta^{7}-3/\zeta^{8}+71/\zeta^{9}-52/\zeta^{10}-131/\zeta^{11}+17/\zeta^{12}  
-25/\zeta^{13} \\
&+133/\zeta^{14}+19/\zeta^{15}-64/\zeta^{16}-5/\zeta^{17}-47/\zeta^{18}+39/\zeta^{19}+21/\zeta^{20}  \\
&-5/\zeta^{21}-2/\zeta^{22}-19/\zeta^{23}{+}11/\zeta^{24}{+}5/\zeta^{25}{-}5/\zeta^{26}{+}1/\zeta^{27})
+ O({\mathbf q^4});  \\
&\psi_{93} = {\mathbf q^2}( \operatorname{coeff}(\psi_{62},q^3) )
+ O({\mathbf q^3}) .
\end{align*}
\end{proposition}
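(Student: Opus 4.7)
The plan is to carry out a finite computation. First, for each $j = 1, 2, \ldots, 12$, I will compute $\dim J_{4, 31j}^{\text{cusp}}$ from the Eichler--Zagier/Skoruppa formula displayed above and construct an explicit basis $\{\psi_{j, \alpha}\}_\alpha$ for $J_{4, 31j}^{\text{cusp}}$. Theta blocks $T(d)(\tau, z) = \prod_{i=1}^{8} \vartheta(\tau, d_i z)$ with $d \in \N^8$, $d \cdot d = 62j$, and mixed parity provide a natural source of cusp forms; I will enumerate such $d$, certify linear independence by inspecting sufficiently many Fourier coefficients, and, should the theta-block span fall short of the target dimension, supplement via the theta-decomposition of Jacobi forms into vector-valued modular forms or via Hecke translates of lower-index Jacobi forms.

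Next, I will parametrize a general $\Phi = \sum_{j=1}^{12} \phi_{31j}\, \xi^{31j} \in \prod_{j=1}^{12} J_{4, 31j}^{\text{cusp}}$ by unknowns $c_{j, \alpha}$ via $\phi_{31j} = \sum_\alpha c_{j, \alpha}\, \psi_{j, \alpha}$. The Involution$(\epsilon)$ condition then becomes a homogeneous linear system in these unknowns: for each admissible triple $(n, r, j)$ with $1 \le n \le j \le 12$ and $\smtwomat{n}{r/2}{r/2}{31j} > 0$, the relation
\begin{equation*}
c(n, r; \phi_{31j}) - \epsilon\, c(j, -r; \phi_{31n}) = 0
\end{equation*}
expands, through the precomputed Fourier data of the basis, into a $\Q$-linear equation in the $c_{j, \alpha}$. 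Setting $\epsilon = -1$ yields the defining system for $A(12)$; setting $\epsilon = +1$ together with the extra constraint $\phi_{31} = 0$ yields the defining system for $B(12)$.

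Solving these two systems is expected to give $\dim A(12) = 0$ and $\dim B(12) = 1$. The unique normalized spanning element of $B(12)$ will produce the claimed $\Phi_0$: its leading coefficient $\psi_{62}$ appears as tabulated, and the second coefficient $\psi_{93}$ is then forced term-by-term from $\psi_{62}$ by applying Involution$(+)$ with $n = 2$ together with the $r$-symmetry $c(n, r) = c(n, -r)$ of even-weight Jacobi coefficients, giving $c(2, r; \psi_{93}) = c(3, r; \psi_{62})$ for all $r$. As an independent cross-check, Lemma~\ref{Lemma: Little} injects $S_4(K(31))^{\pm}$ into the Involution$(\pm)$ subspaces of $\prod_{j=1}^{12} J_{4, 31j}^{\text{cusp}}$, so the computation forces $\dim S_4(K(31))^{-} = 0$ and $\dim \SSS \le 1$; combined with the five-dimensional image of $\Grit: J_{4, 31}^{\text{cusp}} \to S_4(K(31))^{+}$ and Ibukiyama's formula $\dim S_4(K(31)) = 6$, all these inequalities become equalities.

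The hard part will be producing a genuinely complete basis of $J_{4, 31j}^{\text{cusp}}$ for $j$ close to $12$: the dimensions grow quickly, theta blocks alone are not guaranteed to span in every index, and the Fourier-coefficient bookkeeping becomes bulky. Once bases are in hand, the remaining linear algebra is large but sparse and over $\Q$, and presents no further theoretical difficulty.
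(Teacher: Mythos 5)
Your plan is sound and would, in principle, establish the Proposition: it rests on the same ingredients as the paper (theta blocks checked against the Eichler--Zagier/Skoruppa dimension formula, the linear relations from Involution$(\epsilon)$, injectivity of $\pi_{12\cdot 31}\circ\FJ$ from Lemma~\ref{Lemma: Little}, and Ibukiyama's $\dim S_4(K(31))=6$ together with the five-dimensional Gritsenko image to supply existence and force $\dim B(12)\ge 1$). Where you diverge is in the organization of the linear algebra, and the difference matters. You propose one monolithic homogeneous system over full bases of all twelve spaces $J_{4,31j}^{\text{cusp}}$ --- precisely the bulk you flag as the ``hard part.'' The paper instead exploits the order filtration $J_{k,m}(\nu)=\{\phi:\ord\phi\ge\nu\}$: the Involution$(\epsilon)$ relations with $n<j$ determine the $q^n$-coefficients of $\phi_{31j}$ from the earlier $\phi_{31n}$, and (for $\delta=1$, i.e.\ the diagonal case $n=j$ in the minus space) kill the $q^j$-coefficients, so the only ambiguity at stage $j$ lives in $J_{4,31j}^{\text{cusp}}(j+\delta)$. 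The computation therefore reduces to a chain of small problems: $J_{4,31}^{\text{cusp}}(2)=0$ (the five index-$31$ theta blocks have independent $q^1$-terms), $\dim J_{4,62}^{\text{cusp}}(2)=1$ giving $\psi_{62}$, $J_{4,93}^{\text{cusp}}(3)=0$ with the four-dimensional $J_{4,93}^{\text{cusp}}(2)$ matched against the $q^3$-data of $\psi_{62}$ to pin down $\psi_{93}=-Q_1-Q_4$, and $J_{4,31j}^{\text{cusp}}(j)=0$ for $3\le j\le 12$, after which uniqueness propagates automatically and one obtains $\dim A(12)=0$, $\dim B(12)\le 1$ without ever solving a large coupled system. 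This filtration is exactly the ``obvious but important point'' of Lemma~\ref{Estimate}, and adopting it would turn your bookkeeping problem into a sequence of tractable rank computations; your global system, while logically equivalent, obscures why the answer is forced after index $93$.
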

\begin{proof}
It is convenient to denote
$J_{k,m}^{\text{cusp}}(\nu)= 
\{ \phi \in J_{k,m}^{\text{cusp}}: \ord  \phi \ge \nu \}$.  
Let $\Phi=0\cdot \xi^{31}+\phi_{62}\xi^{62}+\phi_{93}\xi^{93}+\dots +\phi_{ 31\nu}\xi^{ 31\nu}\in B(\nu)$.  
The space $J_{4,62}^{\text{cusp}}$ is spanned by the $9$ theta blocks $T(d)$ for $d=$ 
[1, 1, 1, 1, 2, 4, 6, 8], 
[1, 1, 1, 2, 2, 2, 3, 10], 
[1, 1, 1, 2, 2, 4, 4, 9], 
[1, 1, 1, 2, 3, 6, 6, 6], 
[1, 1, 2, 2, 2, 2, 5, 9], 
[1, 1, 2, 4, 4, 5, 5, 6], 
[1, 2, 2, 2, 2, 3, 7, 7], 
[1, 3, 4, 4, 4, 4, 5, 5], 
[2, 2, 2, 2, 3, 3, 3, 9].  
The Involution$(+)$ condition tells us that for all 
$\smtwomat{n}{r/2}{r/2}{m}\in \Xtwo(31)$ we have 
$c(n,r;\phi_m)=c(\frac{m}{31},-r;\phi_{31n})$.   
Setting $n=1$ and $m=62$ in condition~Involution$(+)$, we have 
$$
c(1,r;\phi_{62})=c(2,-r;\phi_{31})=c(2,-r;0)=0, 
$$
so that the $q^1$-coefficients of $\phi_{62}$ vanish.  
The subspace $J_{4,62}^{\text{cusp}}(2)$ is spanned by one element, $\psi_{62}$, 
which is the following linear combination of the above nine theta blocks: 
 $\psi_{62}= (-3,-5,-1,-2,-1,0,1,0,1) \cdot (T(d_1),\dots, T(d_{9}))$.  The initial expansion 
 of $\psi_{62}$ is as given above.  Thus $\phi_{62}$ is some multiple of $\psi_{62}$, 
 say $\phi_{62}=\alpha \psi_{62}$ for $\alpha \in \C$, and the subspace $B(2)$ is at most 
 one dimensional.  
 
The space $J_{4,93}^{\text{cusp}}$ is spanned by the $16$ theta blocks $T(c)$ for $c=$ 
[1, 1, 1, 1, 1, 1, 6, 12], 
[1, 1, 1, 1, 1, 6, 8, 9], 
[1, 1, 1, 1, 2, 3, 5, 12], 
[1, 1, 1, 1, 2, 4, 9, 9], 
[1, 1, 1, 1, 4, 6, 7, 9], 
[1, 1, 1, 3, 5, 6, 7, 8], 
[1, 1, 2, 2, 2, 6, 6, 10], 
[1, 1, 2, 3, 3, 3, 3, 12], 
[1, 1, 2, 3, 3, 4, 5, 11], 
[1, 1, 2, 6, 6, 6, 6, 6], 
[1, 2, 3, 3, 3, 3, 8, 9], 
[1, 3, 4, 4, 6, 6, 6, 6], 
[2, 2, 2, 2, 2, 2, 9, 9], 
[2, 2, 2, 2, 2, 3, 6, 11], 
[2, 3, 3, 3, 3, 3, 4, 11], 
[3, 4, 5, 5, 5, 5, 5, 6].  
For $n=1$ and $m=93$ the Involution$(+)$ conditions are 
$c(1,r;\phi_{93})=c(3,-r;\phi_{31}) =0$ so that $\phi_{93}\in J_{4,93}^{\text{cusp}}(2)$.  
The subspace $J_{4,93}^{\text{cusp}}(3)$ is trivial and the subspace $J_{4,93}^{\text{cusp}}(2)$ 
is spanned by the following four linear combinations of theta blocks:
\newcommand\mm{\text{-}}
\begin{align*}
Q_1  &= (\mm1,\mm1,\mm6,\mm6,\mm4,\mm1,0,\mm1,2,0,1,0,0,0,0,0)\cdot(T(c_1),\dots,T(c_{16})), \\
Q_2  &=(\mm2,\mm1,\mm9,\mm6,\mm3,0,\mm1,\mm1,3,0,0,0,1,0,0,0  )\cdot(T(c_1),\dots,T(c_{16})), \\ 
Q_3  &= (\mm1,\mm1,\mm4,\mm2,0,0,1,1,\mm2,0,0,0,0,1,0,0  )\cdot(T(c_1),\dots,T(c_{16})), \\ 
Q_4  &= (1,0,1,3,2,\mm1,0,\mm1,\mm5,0,0,0,0,0,1,0  ) \cdot(T(c_1),\dots,T(c_{16})). 
\end{align*}
Some Fourier coefficients for these $Q_i$ are in Table~2.  
We use the Involution$(+)$ condition for $n=2$ and $m=93$ to find the 
$q^2$-coefficients of $\phi_{93}$.  
\begin{equation}
\label{eq3} 
c(2,r;\phi_{93})= c(3, -r; \phi_{62})= \alpha c(3, -r; \psi_{62})
\end{equation}
The coefficients $c(3, -r; \psi_{62})$ are known and displayed in the statement of 
the Proposition.  The unique element $\phi_{93} \in J_{4,93}^{\text{cusp}}(2)$  
satisfying equation~(\ref{eq3}) is $\alpha \psi_{93}$ where $\psi_{93}= -Q_1-Q_4$.  
This shows that the subspace $B(3)$ is at 
most one dimensional.  Continuing in this way on a computer, we showed that 
$J_{4,31j}^{\text{cusp}}(j)=\{0\}$ for $j=3,\dots,12$ and hence that 
$\dim B(12) \le 1$.

We discuss the minus space.  
The space $J_{4,31}^{\text{cusp}}$ is spanned by $5$ theta blocks $T(b)$ for $b=$ 
[1, 1, 1, 1, 1, 2, 2, 7], [1, 1, 1, 1, 1, 4, 4, 5], [1, 1, 1, 1, 2, 2, 5, 5], [1, 1, 2, 2, 2, 4, 4, 4],  [2, 2, 3, 3, 3, 3, 3, 3].  
For even weights, the Involution$(-)$ conditions are quite restrictive.  
We have $c(j,r;\phi_{31j})=- c(j, -r; \phi_{31j})$, so that the $q^j$-coefficients of $\phi_{31j}$ 
must vanish.  However, the $q^1$-coefficients of the five theta blocks $T(b_i)$ are 
already linearly independent, so $A(1)$ is trivial.  
Now that we know that the first Jacobi coefficient vanishes, 
by the same reasoning as for the plus space, 
the only possible element of $A(2)$ is a multiple of $\psi_{62}$; however the extra 
condition that the $q^2$-coefficients of $\phi_{62}$ vanish shows that $A(2)$ is 
trivial.  The triviality of $A(12) $ now follows from $J_{4,31j}^{\text{cusp}}(j)=\{0\}$ for $j=3,\dots,12$.  

 
 By \thmref{Theorem: Main} we have a map 
 $\pi_{12\cdot 31} \circ \FJ: S_4( K(31) )^{-} \to A(12)$ and, 
 by \lemref{Lemma: Little}, this map is injective; 
 hence $S_4( K(31))^{-} $ is trivial.  
 From Ibukiyama's result, $\dim S_4( K(31) )=6$, 
 we may conclude that $\dim S_4( K(31))^{+}=6$ and $\dim \SSS=1$.  
 Therefore $\Phi_0 \in \pi_{12\cdot 31} \FJ(\SSS) \subseteq B(12)$ 
 and $\dim B(12) =1$.  
 
\end{proof}

From another point of view, 
the merit of the preceding computations consists in providing 
upper bounds for the dimension  of spaces of paramodular cusp forms.  
In this particular case, 
relying on Ibukiyama's dimension formula for the existence of forms, 
we have shown the following Corollary.  
\begin{corollary}
\label{dilly}
$\dim S_4( K(31) )^{+}=6$ and $\dim S_4( K(31) )^{-}=0$.  
\end{corollary}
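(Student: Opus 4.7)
The plan is to derive both equalities from \propref{Proposition: Example} and from Ibukiyama's dimension formula by exploiting the injectivity of the initial Fourier Jacobi expansion map provided by \lemref{Lemma: Little}. Since $\frac{k}{10}\frac{p^{2}+1}{p+1}=\frac{4}{10}\cdot\frac{31^{2}+1}{32}=12.025$, the hypothesis of \lemref{Lemma: Little} is satisfied with $J=12$, so for each $\ep\in\{-1,1\}$ the map
$$
\pi_{12\cdot 31}\circ\FJ:S_{4}(K(31))^{\ep}\longrightarrow\prod_{j=1}^{12}J_{4,31j}^{\mathrm{cusp}}
$$
is injective.

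First I would handle the minus eigenspace. If $f\in S_{4}(K(31))^{-}$ then $\pi_{12\cdot 31}\FJ(f)$ satisfies the Involution$(-)$ condition, and its leading Jacobi coefficient lies in $J_{4,31}^{\mathrm{cusp}}$, so by definition $\pi_{12\cdot 31}\FJ(f)\in A(12)$. \propref{Proposition: Example} asserts $A(12)=\{0\}$, and injectivity then forces $f=0$, giving $\dim S_{4}(K(31))^{-}=0$.

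Next I would pin down the plus eigenspace. Ibukiyama's dimension formula (quoted in Section~4) with $k=4$, $p=31$ yields $\dim S_{4}(K(31))=6$; since $S_{4}(K(31))=S_{4}(K(31))^{+}\oplus S_{4}(K(31))^{-}$ and the minus summand is trivial by the previous paragraph, we conclude $\dim S_{4}(K(31))^{+}=6$.

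The only nontrivial ingredient is the vanishing $A(12)=\{0\}$, which has already been established in the proof of \propref{Proposition: Example} by a finite computation with theta blocks verifying $J_{4,31j}^{\mathrm{cusp}}(j)=\{0\}$ for $j=3,\dots,12$; everything else is a bookkeeping combination of \lemref{Lemma: Little} and Ibukiyama's formula. I expect no serious obstacle at this stage, as the hard work was carried out in \propref{Proposition: Example}.
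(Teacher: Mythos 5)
Your proposal is correct and follows the paper's own route exactly: the paper also deduces $\dim S_4(K(31))^-=0$ from the triviality of $A(12)$ via the injectivity in \lemref{Lemma: Little}, and then obtains $\dim S_4(K(31))^+=6$ from Ibukiyama's total dimension $\dim S_4(K(31))=6$. The only hard content is $A(12)=\{0\}$, which both you and the paper take from the computation in \propref{Proposition: Example}.
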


\section{Final Remarks}
\label{sec: Remarks}

We conclude by comparing the Involution condition with the following weaker inequality;  
 for general~$N$, we cannot even show that the right hand side is finite:  
\begin{equation}
\label{Hmm}
\dim S_k\left( \KN \right)^{ (-1)^k} \le 
\sum_{j=1}^{\infty} \dim J_{k,Nj}^{\text{\rm cusp}}(j).
\end{equation}

\newpage
\centerline{Table 3}
\centerline{Values of $\dim J_{k,5j}^{\text{\rm cusp}}(j)$}
\smallskip
\begintable
\   ${}_k\backslash {}^j$ \ &\  $1$   \ &\    $2$   \ &\   $3$  \ &\   $4$  \ &\    $5$     \ &\    $\sum_{j=1}^{\infty} \dim J_{k,5j}^{\text{\rm cusp}}(j)$   \ &\   $\dim S_k(K(5))^{ (-1)^k }$ \crthick
$      1$ &  $    $  & $  $ &    $ $ & $ $  \ &\    $ $        \ &\    $0$   \ &\   $0$ \nr
$      2$ &  $    $  & $  $ &    $ $ & $ $  \ &\    $ $       \ &\    $0$   \ &\   $0$ \nr
$      3$ &  $   $  & $  $ &    $ $ & $ $   \ &\    $ $    \ &\    $0$   \ &\   $0$ \nr
$      4$ &  $   $  & $  $ &    $ $ & $ $   \ &\    $ $    \ &\    $0$   \ &\   $0$ \nr
$     5$ &  $  1$  & $  $ &    $ $ & $ $  \ &\    $ $    \ &\    $1$   \ &\   $1$  \cr
$     6$ &  $  1$  & $  $ &    $ $ & $ $  \ &\    $ $    \ &\    $1$   \ &\   $1$  \nr
$     7$ &  $  1$  & $ {}$ &    $ $ & $ $   \ &\    $ $     \ &\    $1$   \ &\   $1$ \nr
$     8$ &  $  2$  & $ 0$ &    $ $ & $ $   \ &\    $ $    \ &\    $2$   \ &\   $2$ \nr
$      9$ &  $  2$  & $ 0$ &    ${}$ & $ $   \ &\    $ $     \ &\    $2$   \ &\   $2$ \nr
$      10$ &  $  3$  & $ 1$ &    $ 0$ & $ $   \ &\    $ $     \ &\    $4$   \ &\   $4$ \cr
$      11$ &  $  3$  & $ 1$ &    $ 0$ & $ $  \ &\    $ $     \ &\    $4$   \ &\   $4$ \nr
$      12$ &  $  4$  & $ 2$ &    $ 1$ & $ $  \ &\    $ $     \ &\    $7$   \ &\   $6$ \nr
$      13$ &  $  3$  & $ 2$ &    $ 1$ & $0$   \ &\    $ $     \ &\    $6$   \ &\   $5$ \nr
$      14$ &  $  5$  & $ 3$ &    $ 2$ & $0$   \ &\    $ $      \ &\    $10$   \ &\   $8$ \nr
$      15$ &  $  4$  & $ 3$ &    $ 2$ & $ 0$   \ &\    $ $      \ &\    $9$   \ &\   $8$ 
\endtable

For the case $N=31$ and $k=4$ we have demonstrated the equality 
$\dim S_4\left(K(31) \right)^{+} =
\sum_{j=1}^{20} \dim J_{4,31j}^{\text{\rm cusp}}(j)$ or 
$6=5+1+0+0+\dots+0$.  
However tempting it may be to replace the $20$ by $\infty$, 
we cannot be sure about that equality because we have only computed 
$ \dim J_{4,31j}^{\text{\rm cusp}}(j)=0$ for $3 \le j \le 20$.  We can, however, 
be certain about inequalities;  for example with $N=29$ and $k=4$, we can show the inequality 
$\dim S_4\left(K(29) \right)^{+} <
\sum_{j=1}^{\infty} \dim J_{4,29j}^{\text{\rm cusp}}(j)$ or 
$5<5+1+0+\dots$.   The space $ J_{4,58}^{\text{\rm cusp}}(2)$ is one dimensional, 
spanned by $\Psi$ say, but there is no element in $ J_{4,87}^{\text{\rm cusp}}(2)$ 
whose $q^2$-terms equal the $q^3$-terms of $\Psi$.  Hence there does exists a 
$\Phi= \Psi \xi^2$ satisfying the Involution$(+)$ conditions to second order that is not the 
initial Fourier Jacobi expansion of any paramodular cusp form from $S_4\left(K(29) \right)^{+} $.  
All the $\Phi$ that satisfy the Involution$(+)$ condition to third order, however, are the 
initial Fourier Jacobi expansions of  paramodular cusp forms from $S_4\left(K(29) \right)^{+} $.  
Hence, again in this example, the Involution condition continues to compute the space 
$S_4\left(K(29) \right)^{+} $ correctly even when the inequality~(\ref{Hmm}) is strict.  

One case where the convergence of the series 
$\sum_{j=1}^{\infty} \dim J_{k,Nj}^{\text{\rm cusp}}(j)$ is known for all weights~$k$ is $N=5$.  
Here the sum need only be taken to $j=\ldL k/2 \rdL$ because 
$\ord \phi \le (k+2m)/12$ for $\phi \in J_{k,m}$.  A more refined estimate of 
Gritsenko and Hulek \cite{GritHulek} shows that $j \le \ldL (3k-6)/8 \rdL$ suffices when $N=5$.  
Since $N=5$ is also the first level where the inequality~(\ref{Hmm}) can be strict, 
it is of some interest to ponder this data.  
Table~3 gives the values of $\dim J_{k,5j}^{\text{\rm cusp}}(j)$ for $1 \le k \le 15$ and for 
$j \le \ldL (3k-6)/8 \rdL$.  These were computed by using theta blocks to span the spaces of Jacobi forms.  
For weights $k \le 15$, the dimensions of $S_k(K(5))^{\pm}$ in Table~3 may be found in a manner similar to 
that used to prove Corollary~{\ref{dilly}}.    
One sees in Table~3 that the inequality~(\ref{Hmm}) is already strict for weight~$k=12$ and hence that the 
method that was used for $1 \le N \le 4$ to  prove the surjectivity of 
$\FJ: M_k\left( \KN  \right)^{\ep} \to \MM_k(N)  ^{\ep}$ for all weights 
will not work for $N=5$.

\newpage
\centerline{Table 2}
\smallskip
\begintable
\   $r$ \ &\  $c(2,r;Q_1)$   \ &\    $c(2,r;Q_2)$   \ &\   $c(2,r;Q_3)$  \ &\   $c(2,r;Q_4)$\crthick
$      0$ &  $   114$  & $ 300$ &    $6$ & $-226$ \nr
$      1$ &  $  -38$  & $ -145$ &    $-69$ & $12$ \nr
$     2$ &  $  14$  & $ -47$ &    $89$ & $-24$\nr
$     3$ &  $  -60$  & $ -1$ &    $41$ & $146$ \nr
$      4$ &  $  -34$  & $84$ &    $-72$ & $72$ \cr
$      5$ &  $  40$  & $-69$ &    $-53$ & $9$\nr
$      6$ &  $  65$  & $ -27$ &    $41$ & $-28$ \nr
$     7$ &  $  15$  & $209$ &    $74$ & $-174$ \nr
$      8$ &  $  -42$  & $ -113$ &    $0$ & $45$ \nr
$     9$ &  $  -49$  & $ -137$ &    $-103$ & $-22$ \cr
$      10$ &  $  -65$  & $-72$ &    $-49$ & $117$ \nr
$      11$ &  $  137$  & $ 303$ &    $190$ & $-6$ \nr
$      12$ &  $  -33$  & $-93$ &    $-55$ & $16$ \nr
$     13$ &  $  61$  & $ -44$ &    $-42$ & $-36$ \nr
$      14$ &  $  -79$  & $ -10$ &    $-72$ & $-54$ 
\endtable

\newpage
\centerline{Table 2 (continued)}
\smallskip
\begintable
\   $r$ \ &\  $c(2,r;Q_1)$   \ &\    $c(2,r;Q_2)$   \ &\   $c(2,r;Q_3)$  \ &\   $c(2,r;Q_4)$\crthick
$      15$ &  $  -42$  & $0$ &    $67$ & $23$ \nr
$      16$ &  $  67$  & $ 30$ &    $101$ & $-3$ \nr
$     17$ &  $  -40$  & $-99$ &    $-122$ & $45$ \nr
$      18$ &  $  73$  & $149$ &    $19$ & $-26$ \nr
$     19$ &  $  -57$  & $-55$ &    $-3$ & $18$ \cr
$      20$ &  $  3$  & $ -23$ &    $31$ & $-24$ \nr
$     21$ &  $  7$  & $-6$ &    $-5$ & $-2$ \nr
$     22$ &  $  -7$  & $9$ &    $-28$ & $9$ \nr
$      23$ &  $  19$  & $ 30$ &    $24$ & $0$ \nr
$      24$ &  $  -18$  & $ -35$ &    $-8$ & $7$ \cr
$     25$ &  $  7$  & $14$ &    $1$ & $-12$ \nr
$     26$ &  $  -1$  & $-2$ &    $0$ & $6$ \nr
$      27$ &  $  0$  & $ 0$ &    $0$ & $-1$
\endtable

\end{document}

\end{document}
\end{document}

\newpage
Dave actually has a bigger Table for N=5 but we are not going to print it all.  
The dimensions are written as sym+antisym if there are any antisymmetric forms.  
The bound $\sum_{j=1}^{\infty} \dim J_{k,5j}^{\text{\rm cusp}}(j)$ just bounds the 
symmetric piece.  \newline

\smallskip
\begintable
\   ${}_k\backslash {}^j$ \ &\  $1$   \ &\    $2$   \ &\   $3$  \ &\   $4$  \ &\    $5$   \ &\   $6$   \ &\    $\sum_{j=1}^{\infty} \dim J_{k,5j}^{\text{\rm cusp}}(j)$   \ &\   $\dim S_k(K(5))$ \crthick
$      11$ &  $  3$  & $1$ &    $\zzz$ & $ $  \ &\    $ $   \ &\   $ $   \ &\    $4$   \ &\   $4$ \nr
$      12$ &  $  4$  & $2$ &    $1$ & $ $  \ &\    $ $   \ &\   $ $   \ &\    $7$   \ &\   $6$ \nr
$      13$ &  $  3$  & $ 2$ &    $1$ & $\zzz$   \ &\    $ $   \ &\   $ $   \ &\    $6$   \ &\   $5$ \nr
$      14$ &  $  5$  & $ 3$ &    $2$ & $\zzz$   \ &\    $ $   \ &\   $ $   \ &\    $10$   \ &\   $9=8+1$ \nr
$      15$ &  $  4$  & $3$ &    $2$ & $ \zzz$   \ &\    $ $   \ &\   $ $   \ &\    $9$   \ &\   $8$ \cr
$     16$ &  $  6$  & $5$ &    $3$ & $\zzz$  \ &\    $\zzz$   \ &\   $ $  \ &\    $14$   \ &\   $13=12+1$  \nr
$     17$ &  $  5$  & $4$ &    $3$ & $1$  \ &\    $\zzz$   \ &\   $ $  \ &\    $13$   \ &\   $12=11+1$  \nr
$     18$ &  $  7$  & $6$ &    $5$ & $1$  \ &\    $\zzz$   \ &\   $ \zzz$  \ &\    $19$   \ &\   $18=16+2$  \nr
$      19$ &  $  5$  & $ 6$ &    $4$ & $2$  \ &\    $1$   \ &\   $\zzz$  \ &\    $18$   \ &\   $16=14+2$  \nr

$     20$ &  $  8$  & $ 8$ &    $6$ & $3$   \ &\    $1$   \ &\   $\zzz$   \ &\    $26$   \ &\   $25=21+4$ \cr

$     21$ &  $  6$  & $7$ &    $6$ & $3$  \ &\    $2$   \ &\   $ \zzz$  \ &\    $24$   \ &\   $22=19+3$  \nr
$     22$ &  $  9$  & $10$ &    $8$ & $4$  \ &\    $3$   \ &\   $ \zzz$  \ &\    $34$   \ &\   $33=27+6$  \nr
$     23$ &  $  7$  & $9$ &    $7$ & $5$  \ &\    $3$   \ &\   $ \zzz$  \ &\    $31$   \ &\   $30=25+5$  \nr
$      24$ &  $  10$  & $ 12$ &    $10$ & $6$  \ &\    $4$   \ &\   $1$  \ &\    $43$   \ &\   $43=34+9$  \nr
$     25$ &  $  7$  & $ 10$ &    $9$ & $6$   \ &\    $5$   \ &\   $1$   \ &\    $ 38$   \ &\   $37=30+7$ 
\endtable